\documentclass[1pt]{article}
\usepackage{amsmath, amssymb, amsthm, amsfonts, cases}
\usepackage{mathrsfs}
\usepackage{url}
\usepackage{authblk}
\usepackage[usenames]{color}
\usepackage{geometry}

\newtheorem{lemma}{Lemma}[section]
\newtheorem{theorem}{Theorem}[section]

\newtheorem{definition}{Definition}[section]
\newtheorem{pro}{Problem}[section]
\newtheorem{ass}{Assumption}[section]

\newcommand{\eps}{\varepsilon}

\newcommand{\la}{\langle}
\newcommand{\ra}{\rangle}

\geometry{left=1cm,right=1cm,top=1.5cm,bottom=1.5cm}

\allowdisplaybreaks

 \makeatother
\begin{document}

\title{   Maximum Principle of  Forward-Backward Stochastic Differential System of Mean-Field Type with Observation Noise
 \thanks{This work was supported by the Natural Science Foundation of Zhejiang Province
for Distinguished Young Scholar  (No.LR15A010001),  and the National Natural
Science Foundation of China (No.11471079, 11301177) }}

\date{}

   \author{ Qingxin Meng\thanks{Corresponding author.   E-mail: mqx@zjhu.edu.cn}  \hspace{1cm}
   Qiuhong Shi
 \hspace{1cm}  Maoning Tang
\hspace{1cm}
\\\small{Department of Mathematics, Huzhou University, Zhejiang 313000, China}}

\maketitle
\begin{abstract}

This paper is concerned with the partial information optimal control problem  of mean-field type  under  partial observation,  where the  system is
  given by a controlled mean-field forward-backward stochastic differential equation with correlated noises between the system and the observation, moreover  the observation
coefficients may depend  not only on the
control process and  but also on  its probability distribution.
Under standard assumptions on the coefficients, necessary
and sufficient conditions for optimality of the control problem in
the form of Pontryagin's maximum principles
are established in a unified way.
\end{abstract}

\textbf{Keywords} Mean-Field, FBSDE, Partial Observation,  Girsanov¡¯s Theorem, Maximum Principle

\maketitle

\section{ Introduction}

In recent years, the systems with interacting behavior have attracted
increasing attention in the stochastic control theory. The so-called mean-field models
are designed to study such systems. The history of the mean-field models can be dated back to
the early works of \cite{K1956,M1966}. Since then,
the mean-field models have been found useful to describe the aggregate
behavior of a large number of mutually interacting particles in diverse
areas of physical sciences, such as statistical mechanics, quantum mechanics
and quantum chemistry. Recent interest is to study the stochastic maximum principle
under the mean-field models. Previous works include \cite{AD2011,BDL2011,L2012,MOZ2012,SS2013,DHQ2013,SMS2014},
and references therein.

It is well known that  forward-backward stochastic differential
equations (FBSDEs in short) consists of a forward stochastic
differential equation (SDE in short)  of It\^{o} type and a backward
stochastic differential equation (BSDE in short) of Pardoux-Peng
(for details see \cite{PaPe90},\cite{ElPe}).
FBSDEs are not only encountered in  stochastic optimal control problems
when applying the stochastic maximum principle but also used in
mathematical finance (see Antonelli \cite{Anto}, Duffie and Epstein
\cite{DuEp}, El Karoui, Peng and Quenez \cite{ElPe} for example). It
now becomes more clear that certain important problems in
mathematical economics and mathematical finance, especially in the
optimization problem, can be formulated to be FBSDEs.
 There are two important approaches to the general stochastic optimal
control problem. One is the Bellman dynamic programming principle,
which results in the Hamilton-Jacobi-Bellman equation.  The other
 is the maximum principle.
 Now the maximum principle of forward-backward stochastic systems
 driven by  Brownian motion have been studied extensively in the
 literature. We refer to \cite{ShWz2006, Wu9,Xu95,Yong10}and references therein.

The main contribution of this paper is
that one sufficient (a verification
theorem)  and one necessary optimality conditions  for the existence of optimal controls of FBSDE of mean field type are 
established in a unified way. The
main idea is to get directly a variation formula in terms of the
Hamiltonian and the associated adjoint system which is governed by  a linear mean-field
forward-backward
stochastic differential equation of mean-field  and neither the corresponding Taylor type expansions of 
the state process and the cost functional nor the variational systems will be used.

The paper is organized as follows. In section 2,  the partial information optimal problem of mean-field systems for 
FBSDE is formulated
 and various assumptions used throughout the paper is presented. In Section 3 we establish sufficient and necessary stochastic maximum principles in a unified way.

\section{Formulation of Problem}

Let ${\cal T} : = [0, T]$ denote a
 fixed time interval of finite length,
 i.e., $T<\infty.$ We equip $( \Omega,
{\mathscr F}, {\mathbb P} )$ a complete probability space with a
right-continuous, $\mathbb P-$
complete filtration $\mathbb F=\{\mathscr{F}_t\}_{t\in {\cal T}}$, to be specified below. By $\mathscr{P}$ we
denote the
predictable $\sigma$ field on $\Omega\times [0, T]$ and by $\mathscr %
B(\Lambda)$ the Borel $\sigma$-algebra of any topological space
$\Lambda.$ Furthermore, we assume
that $\mathscr F_T=\mathscr F.$
Denote by $\mathbb E[\cdot]$ be
the expectation taken with respect
to $\mathbb P.$ Let $\{W(t), t \in {\cal T}\}$ and $\{Y(t),t \in {\cal T}\}$
be two one-dimensional
standard Brownian motions
on $( \Omega,
{\mathscr F}, {\mathbb P} )$. Denote by  $%
\{\mathscr{F}^W_t\}_{t\in {\cal T}}$ and $%
\{\mathscr{F}^Y_t\}_{t\in {\cal T}}$ be the natural
filtration generated by $\{W(t), t\in {\cal T}\}$ and $\{Y(t), t\in {\cal T}\},$ respectively. Assume that
$\mathbb F$ is the $\mathbb P-$
augmentation of the natural
filtration generated by
 $\{\mathscr{F}^W_t\}_{t\in {\cal T}}$ and
$\{\mathscr{F}^Y_t\}_{t\in {\cal T}}.$

 Let $E$ be a Euclidean space.
 Denote the inner product in $E$ by
$\langle\cdot, \cdot\rangle,$  the norm in $ E$  by $|\cdot|,$  the
transpose of the matrix or vector $A$ by $A^{\top }$
For a
function $\psi:\mathbb R^n\longrightarrow \mathbb R,$ denote by
$\psi_x$ its gradient. If $\psi: \mathbb R^n\longrightarrow \mathbb R^k$ (with
$k\geq 2),$ then $\psi_x=(\frac{\partial \phi_i}{\partial x_j})$ is
the corresponding $(k\times n)$-Jacobian matrix. By $\mathscr{P}$ we
denote the
predictable $\sigma$ field on $\Omega\times [0, T]$ and by $\mathscr %
B(\Lambda)$ the Borel $\sigma$-algebra of any topological space
$\Lambda.$ In the follows, $K$ represents a generic constant, which
can be different from line to line.

Next we introduce some spaces of random variable and stochastic
 processes. For any $\alpha, \beta\in [1,\infty),$ let
 \begin{enumerate}
\item[$\bullet$]
$M_{\mathscr{F}}^\beta(0,T;E)$: the space of all $E$-valued and ${%
\mathscr{F}}_t$-adapted processes $f=\{f(t,\omega),\ (t,\omega)\in {\cal T}
\times\Omega\}$ satisfying
$
\|f\|_{M_{\mathscr{F}}^\beta(0,T;E)}\triangleq{\left (\mathbb E\bigg[\displaystyle%
\int_0^T|f(t)|^ \beta dt\bigg]\right)^{\frac{1}{\beta}}}<\infty, $
 \item[$\bullet$] $S_{\mathscr{F}}^\beta (0,T;E):$ the space of all $E$-valued and ${
\mathscr{F}}_t$-adapted c\`{a}dl\`{a}g processes $f=\{f(t,\omega),\
(t,\omega)\in {\cal T}\times\Omega\}$ satisfying $
\|f\|_{S_{\mathscr{F}}^\beta(0,T;E)}\triangleq{\left (\mathbb E\bigg[\displaystyle\sup_{t\in {\cal T}}|f(t)|^\beta \bigg]\right)^{\frac
{1}{\beta}}}<+\infty,$
 \item[$\bullet$]$L^\beta (\Omega,{\mathscr{F}},P;E):$ the space of all
$E$-valued random variables $\xi$ on $(\Omega,{\mathscr{F}},P)$
satisfying $ \|\xi\|_{L^\beta(\Omega,{\mathscr{F}},P;E)}\triangleq
\sqrt{\mathbb E|\xi|^\beta}<\infty,$
 \item[$\bullet$] $M_{\mathscr{F}}^\beta(0,T;L^\alpha (0,T; E)):$ the space of all $L^\alpha (0,T; E)$-valued and ${%
\mathscr{F}}_t$-adapted processes $f=\{f(t,\omega),\ (t,\omega)\in[0,T]%
\times\Omega\}$ satisfying $
\|f\|_{\alpha,\beta}\triangleq{\left\{\mathbb E\bigg[\left(\displaystyle
\int_0^T|f(t)|^\alpha
dt\right)^{\frac{\beta}{\alpha}}\bigg]\right\}^{\frac{1}{\beta}}}<\infty. $
\end{enumerate}

In the following, under  partial observations,  we formulate a class of    optimal control problems of mean-field type. Consider the following
 controlled FBSDE of  mean-field type:

\begin{eqnarray} \label{eq:1}
\left\{
\begin{aligned}
dx(t)=&b(t, x(t), u(t),\mathbb E[x(t)], \mathbb E[u(t)])dt+ \sigma_1(t, x(t), u(t), \mathbb E[x(t)], \mathbb E[u(t)]) dW(t)+\sigma _2(t, x(t), u(t),\mathbb E[x(t)], \mathbb E[u(t)]) dW^u(t),
\\
dy(t)=&f(t, x(t), y(t),z_1(t),z_2(t),u(t),
\mathbb E[x(t)],\mathbb E[y(t)],\mathbb E[z_1(t)],\mathbb E[z_2(t)], \mathbb E[u(t)])dt+ z_1(t) dW(t)+  z_2(t) dW^u(t),\\
x(0)=&x_0,\\
y(T)=&\phi(x(T),\mathbb E[x(T)]),
\end{aligned}
\right.
\end{eqnarray}
where
$u(\cdot)$
 is our admissible
 control process
 taking values  in  $U$  being a nonempty convex subset of $\mathbb R^k;$
 $( x(\cdot),  y(\cdot), z_1(\cdot), z_2(\cdot))$, the solution of
 \eqref{eq:4} is the state
 process with initial
 state $x_0$ and $W^u,$ taking
 values in $\mathbb R,$ is a
 stochastic
 process depending on the control
 process $u(\cdot).$
 In the above, $b: {\cal T} \times \Omega \times ({\mathbb R}^n
 \times U)^2  \rightarrow {\mathbb R}^n$,
  $\sigma_1:
{\cal T} \times \Omega \times ({\mathbb R}^n\times U)^2 \rightarrow {\mathbb R}^n$, $\sigma_2: {\cal T} \times \Omega \times ({\mathbb R}^n
 \times U)^2 \rightarrow {\mathbb R}^n $,
 $f: {\cal T} \times \Omega \times ({\mathbb R}^n \times {\mathbb R}^m\times {\mathbb R}^m \times {\mathbb R}^m
 \times U)^2 \rightarrow {\mathbb R}^m,
 \phi: \Omega \times ({\mathbb R}^n)^2  \rightarrow {\mathbb R^m}$
   are given random mapping.

 Suppose that the state
 process $(x(\cdot), y(\cdot), z_1(\cdot), z_2(\cdot))$ cannot be directly.
 Instead, we can observe a related
 process $Y(\cdot)$ is given by the
 following SDE of mean-field type
\begin{eqnarray}\label{eq:2}
\left\{
\begin{aligned}
dY(t)=&h(t, x(t),u(t), \mathbb E[x(t)], \mathbb E[u(t)])dt+ dW^u(t),\\
Y(0)=& 0,
\end{aligned}
\right.
\end{eqnarray}
where
   $h: {\cal T} \times \Omega \times ({\mathbb R}^n
 \times U)^2 \rightarrow {\mathbb R}$
  is a given random mapping.
  In the above equations, $u(\cdot)$ is our admissible control
 process defined as follows.

\begin{definition}
   An  control process $u(\cdot)$ is
   called admissible control process
   if it is  ${\mathscr F}^Y_t$-adapted and valued $U$  such that $$\mathbb E\bigg[\bigg(\int_0^T|u(t)|^2
dt\bigg)^{2}\bigg]<\infty.$$
  Denote by $\cal A$ the set of all admissible controls.
\end{definition}

Now we make the following standard  assumptions
 on the coefficients of the equations
 \eqref{eq:1} and
 \eqref{eq:2}.

\begin{ass}\label{ass:1.1}
(i)For the mapping $\psi=b,\sigma_1,\sigma_2, h,$ it is  ${\mathscr P} \otimes ({\mathscr
B} ({\mathbb R}^n)  \otimes {\mathscr B}
(U))^2 $-measurable such that $\psi(\cdot,0,0,0,0)\in M_{\mathscr{F}}^4(0,T;L^2 (0,T; E))$.  For almost all $(t, \omega)\in {\cal T}
\times \Omega$, the mapping
\begin{eqnarray*}
(x,u,x',u') \rightarrow \psi(t,\omega,x,u,x',u')
\end{eqnarray*}
 is continuous differentiable with respect to $(x,u,x',u')$ with continuous and uniformly bounded derivatives, where $\psi=b, \sigma_1,  \sigma_2$ and $h.$ (ii)The mapping $f$ is  ${\mathscr P} \otimes ({\mathscr
B} ({\mathbb R}^n)\otimes {\mathscr
B} ({\mathbb R}^m)\otimes {\mathscr
B} ({\mathbb R}^m) \otimes {\mathscr
B} ({\mathbb R}^m)  \otimes {\mathscr B}
(U))^2 $-measurable such that
$f(\cdot,0,0,0,0,0,0)\in M_{\mathscr{F}}^4(0,T;L^2 (0,T; E))$. For almost all $(t, \omega)\in {\cal T}
\times \Omega$, the mapping
\begin{eqnarray*}
(x,y, z_1, z_2,u,x',y', z_1', z_2',u') \rightarrow f(t,\omega,x,y,z_1,z_2, u,
x,'y,' z_1', z_2',u')
\end{eqnarray*}
 is continuous differentiable with respect to $(x,y, z_1, z_2,u,x',y', z_1', z_2',u')$ with
appropriate growths. More precisely, there
is  a constant $C
> 0$  such that for  all $(x,y, z_1, z_2,u,x',y', z_1', z_2',u') \in
(\mathbb {R}^n\times \mathbb  R^m\times \mathbb R^m\times \mathbb R^m \times U)^2$ and a.e. $(t, \omega)\in {\cal T} \times \Omega,$
\begin{eqnarray}
\begin{split}
& (1+|x|+|y|+|z_1|+|z_2|+|u|+|x'|+|y'|+|z_1'|
+|z_2'|+|u'|)^{-1}
|f(t,x,y, z_1, z_2,u,x',y', z_1', z_2',u')|
\\&+|f_x(t,x,y, z_1, z_2,u,x',y', z_1', z_2',u')|
+|f_y(t,x,y, z_1, z_2,u,x',y', z_1', z_2',u')|
+|f_{z_1}(t,x,y, z_1, z_2,u,x',y', z_1', z_2',u')|
\\&+|f_{z_2}(t,x,y, z_1, z_2,u,x',y', z_1', z_2',u')|
+|f_u(t,x,y, z_1, z_2,u,x',y', z_1', z_2',u')|
+|f_{x'}(t,x,y, z_1, z_2,u,x',y', z_1', z_2',u')|
\\&+|f_{y'}(t,x,y, z_1, z_2,u,x',y', z_1', z_2',u')|
+|f_{z_1'}(t,x,y, z_1, z_2,u,x',y', z_1', z_2',u')|
+|f_{z_2'}(t,x,y, z_1, z_2,u,x',y', z_1', z_2',u')|
\\&+|f_{u'}(t,x,y, z_1, z_2,u,x',y', z_1', z_2',u')|
\leq C.
\end{split}
\end{eqnarray}
(iii) The mapping
$\phi$ is ${\mathscr F}_T \otimes ({\mathscr B} ({\mathbb
R}^n))^2 $-measurable.  For almost all $(t, \omega)\in [0,T]
\times \Omega$, the mapping

\begin{eqnarray*}
x \rightarrow \phi(\omega,x,x')
\end{eqnarray*}
 is continuous differentiable with respect to $(x,x')$ with
appropriate growths, respectively.
More precisely, there exists a constant $C
> 0$  such that for  all $(x,x')\in (\mathbb {R}^n)^2$ and a.e. $ \omega\in  \Omega,$

\begin{eqnarray}
\begin{split}
& (1+|x|+|x'|)^{-1}
|\phi(x,x')|
+|\phi_x(x,x')|+|\phi_{x'}(x,x')|
\leq C.
\end{split}
\end{eqnarray}

\end{ass}

  Inserting \eqref{eq:2}  \eqref{eq:1}get
  that
\begin{eqnarray} \label{eq:4}
\left\{
\begin{aligned}
dx(t)=&(b-\sigma_2h)(t, x(t),u(t), \mathbb E[x(t)], \mathbb E[u(t)])dt+ \sigma_1(t, x(t),u(t), \mathbb E[x(t)], \mathbb E[u(t)]) dW(t)+\sigma _2(t, x(t),u(t), \mathbb E[x(t)], \mathbb E[u(t)]) dY(t),
\\
dy(t)=&(f(t, x(t), y(t),z_1(t),z_2(t),u(t),
\mathbb E[x(t)],\mathbb E[y(t)],\mathbb E[z_1(t)],\mathbb E[z_2(t)], \mathbb E[u(t)])
-z_2(t)h(t, x(t),u(t), \mathbb E[x(t)], \mathbb E[u(t)]))dt
\\&+ z_1(t) dW(t)+  z_2(t) dY(t),\\
x(0)=&x,\\
y(T)=&\Phi(x(T),
\mathbb E[x(T)]).
\end{aligned}
\right.
\end{eqnarray}
For any
admissible control $u(\cdot)\in \cal A,$ by Assumption \ref{ass:1.1},
it is easy to have the following
important result on \eqref{eq:4}.

\begin{lemma}\label{lem:3.3}
   Suppose that Assumption \ref{ass:1.1} hold. Then associated wtih
  admissible control $u(\cdot)\in
  \cal A,$   \eqref{eq:4} have  a unique strong
  solution $( x(\cdot),  y(\cdot), z_1(\cdot), z_2(\cdot))\in S_{\mathscr{F}}^4 (0,T;\mathbb R^{n})\times S_{\mathscr{F}}^4 (0,T;\mathbb R^{m})
  \times M_{\mathscr{F}}^2(0,T;L^2 (0,T; \mathbb R^m))\times M_{\mathscr{F}}^2(0,T;L^2 (0,T; \mathbb R^m)).$
Moreover,  the following basic
estimate holds:

\begin{eqnarray}\label{eq:1.8}
\begin{split}
 {\mathbb E} \bigg [ \sup_{t\in \cal T} | x(t) |^4 \bigg
] +{\mathbb E} \bigg [ \sup_{t\in \cal T} | y(t) |^4 \bigg
]+{\mathbb E} \bigg [ \bigg(\int_0^T | z_1(t) |^2 dt\bigg)^{2}
+{\mathbb E} \bigg [ \bigg(\int_0^T | z_2(t) |^2 dt\bigg)^{2}\bigg
]&\leq& K\bigg \{1+|x|^4+\mathbb E\bigg[\Big(\int_0^T |u(t)|^2dt\Big)^{2}\bigg] \bigg\}.
\end{split}
\end{eqnarray}
Moreover, if $(\bar x(\cdot), \bar y(\cdot), \bar z_1(\cdot),
\bar z_2(\cdot))$ is the unique strong solution associated with another admissible control  $ \bar u(\cdot)\in \cal A,$ we have
\begin{eqnarray}\label{eq:1.15}
\begin{split}
&{\mathbb E} \bigg [ \sup_{t\in \cal T} | x (t) - \bar x(t) |^4
\bigg ]+{\mathbb E} \bigg [ \sup_{t\in \cal T} | y (t) - \bar y(t) |^4
\bigg ]+{\mathbb E} \bigg [ \bigg(\int_0^T | z_1(t)-\bar z_1(t) |^2 dt\bigg)^{2}\bigg
]+{\mathbb E} \bigg [ \bigg(\int_0^T | z_2(t)-\bar z_2(t) |^2 dt\bigg)^{2}\bigg
]
\\&\leq  K {\mathbb E} \bigg [ \int_0^T| u(t)- \bar u (t)
|^2 dt\bigg ]^{2}.
\end{split}
\end{eqnarray}
\end{lemma}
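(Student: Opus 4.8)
Note first that \eqref{eq:4} is driven by the two given Brownian motions $W$ and $Y$ on the fixed space $(\Omega,\mathscr F,\mathbb P)$, so it is a genuine mean-field FBSDE and no change of measure enters at this stage. The plan is to exploit its \emph{decoupled} structure: the coefficients $b-\sigma_2 h$, $\sigma_1$, $\sigma_2$ of the forward equation depend only on $(x,u,\mathbb E[x],\mathbb E[u])$ and not on $(y,z_1,z_2)$, hence the forward McKean--Vlasov SDE can be solved in isolation and its solution then inserted as a known coefficient into the backward equation. Thus the proof splits into a forward part and a backward part, in each of which I establish $L^4$-well-posedness by a contraction argument and the a priori bound by an It\^{o}-type energy estimate; \eqref{eq:1.15} then follows by running the same estimates on the differences of two solutions.

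\textbf{Forward equation.} Fix $u(\cdot)\in\mathcal{A}$. By Assumption \ref{ass:1.1}(i) the maps $b-\sigma_2 h$, $\sigma_1$, $\sigma_2$ are, uniformly in $(t,\omega)$, globally Lipschitz in $(x,\mathbb E[x])$, the inhomogeneous terms obtained by evaluating at the origin are suitably integrable, and the dependence on $u$ is controlled through $\mathbb E\big[(\int_0^T|u(t)|^2dt)^2\big]<\infty$. I would run the usual fixed-point argument on $S^4_{\mathscr F}(0,T;\mathbb R^n)$: for $\xi(\cdot)$ in this space freeze $\mathbb E[x(t)]:=\mathbb E[\xi(t)]$ and solve the resulting classical It\^{o} SDE; the solution map is a contraction on a short interval, and patching gives the unique strong solution $x(\cdot)\in S^4_{\mathscr F}(0,T;\mathbb R^n)$. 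Applying It\^{o}'s formula to $|x(t)|^2$ and bootstrapping to the fourth power, using the Burkholder--Davis--Gundy inequality on the $dW$ and $dY$ integrals together with the growth of the coefficients and Gronwall's lemma, yields the $x$-part of \eqref{eq:1.8}.

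\textbf{Backward equation.} With $x(\cdot)$ fixed, the backward equation in \eqref{eq:4} is a mean-field BSDE whose terminal value $\Phi(x(T),\mathbb E[x(T)])$ lies in $L^4(\Omega,\mathscr F_T;\mathbb R^m)$ by the linear growth of $\Phi$ (Assumption \ref{ass:1.1}(iii)) and the forward bound, and whose generator is
\[
\tilde f(t,y,z_1,z_2,\bar y,\bar z_1,\bar z_2):=f\big(t,x(t),y,z_1,z_2,u(t),\mathbb E[x(t)],\bar y,\bar z_1,\bar z_2,\mathbb E[u(t)]\big)-z_2\,h\big(t,x(t),u(t),\mathbb E[x(t)],\mathbb E[u(t)]\big),
\]
evaluated at $(\bar y,\bar z_1,\bar z_2)=(\mathbb E[y],\mathbb E[z_1],\mathbb E[z_2])$. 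By Assumption \ref{ass:1.1}(i)--(ii) this generator is globally Lipschitz in $(y,z_1,z_2,\bar y,\bar z_1,\bar z_2)$, and $\tilde f(t,0,0,0,0,0,0)$ belongs to $M^4_{\mathscr F}(0,T;L^2(0,T;\mathbb R^m))$ because $h(t,x(t),u(t),\cdot)$ and $f(t,x(t),0,0,0,u(t),\cdot)$ inherit that integrability from the forward estimate and from the admissibility of $u$. A contraction argument (again freezing the $\mathbb E[\cdot]$ arguments, on a short interval and patching) then produces the unique solution $(y(\cdot),z_1(\cdot),z_2(\cdot))$ in the asserted spaces, and It\^{o}'s formula applied to $|y(t)|^2$, together with BDG, Young's inequality to absorb $\int_0^T(|z_1|^2+|z_2|^2)dt$ on the left-hand side, and Gronwall, gives the remaining terms of \eqref{eq:1.8}.

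\textbf{Stability, and the main obstacle.} For \eqref{eq:1.15} I subtract the two systems corresponding to $u(\cdot)$ and $\bar u(\cdot)$: the difference $x-\bar x$ solves a McKean--Vlasov SDE whose coefficients are bounded, by Lipschitzness, by $C(|x-\bar x|+|\mathbb E(x-\bar x)|+|u-\bar u|+|\mathbb E(u-\bar u)|)$, so the same It\^{o}/BDG/Gronwall chain gives $\mathbb E[\sup_t|x-\bar x|^4]\le K\,\mathbb E[(\int_0^T|u-\bar u|^2dt)^2]$; then $y-\bar y$ solves a mean-field BSDE with terminal value controlled by $|x(T)-\bar x(T)|$ and generator difference controlled by a constant times the sum of $|x-\bar x|,|y-\bar y|,|z_1-\bar z_1|,|z_2-\bar z_2|,|u-\bar u|$ and their expectations, and the BSDE $L^4$-estimate combined with the forward bound yields \eqref{eq:1.15}. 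The point requiring genuine care is the $L^4$ theory for the backward equation, and in particular the product term $z_2\,h(t,x(t),u(t),\cdot)$ in the generator: since only the derivatives of $h$ and $\sigma_2$, not the functions themselves, are assumed bounded, the fourth-moment estimates must be organized so that this term is absorbed into the $\int_0^T|z_2|^2dt$ produced by the martingale part of the energy identity rather than bounded crudely (the analogous product $\sigma_2 h$ in the forward drift being handled in the same spirit). The mean-field terms $\mathbb E[\cdot]$ are carried through the contraction and Gronwall steps without difficulty via $|\mathbb E[\zeta(t)]|^4\le\mathbb E[|\zeta(t)|^4]$, and the remaining growth and Lipschitz bookkeeping is standard.
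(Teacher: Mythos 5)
Your overall architecture (solve the decoupled forward mean-field SDE first, then the backward equation, each by a freezing-the-expectation contraction plus It\^{o}/BDG/Gronwall, then repeat on differences for \eqref{eq:1.15}) is the standard route and is essentially what the paper delegates to its references; the paper itself offers no proof beyond the citation. However, there is a genuine gap in your execution, and it sits exactly where you located the ``main obstacle'' but did not resolve it. Under Assumption \ref{ass:1.1} only the \emph{derivatives} of $b,\sigma_1,\sigma_2,h$ are bounded, so $\sigma_2$ and $h$ themselves have linear growth in $(x,u,x',u')$. Consequently your claim that the forward drift $b-\sigma_2 h$ is ``globally Lipschitz in $(x,\mathbb E[x])$'' is false: its $x$-derivative contains $\sigma_{2,x}h+\sigma_2 h_x$, which grows linearly in $(x,u)$, so the drift is only locally Lipschitz with quadratic growth, and both your contraction step and your Gronwall bound for the $x$-part of \eqref{eq:1.8} break down. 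The same defect appears in the backward generator: the coefficient of $z_2$ is $h(t,x(t),u(t),\mathbb E[x(t)],\mathbb E[u(t)])$, an unbounded stochastic process, so the generator is not uniformly Lipschitz in $z_2$. Your proposed remedy --- absorbing $2\langle y, z_2 h\rangle$ into the $\int_0^T|z_2|^2\,dt$ term --- does not close the estimate: Young's inequality leaves a term $|y|^2|h|^2$ with random unbounded $|h|^2$, and the ensuing Gronwall argument would require integrability of $\exp\bigl(\int_0^T|h(s)|^2ds\bigr)$, which is not available under the stated assumptions (a weighted, El Karoui--Huang-type stochastic-Lipschitz argument would give estimates in weighted spaces, not the unweighted bounds \eqref{eq:1.8}--\eqref{eq:1.15} claimed).

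The way to close this, consistent with the paper's framework, is not to attack \eqref{eq:4} head-on under $\mathbb P$, but to use that \eqref{eq:4} is just \eqref{eq:1} rewritten through $dW^u=dY-h\,dt$: under $\mathbb P^u$ all coefficients of \eqref{eq:1} have uniformly bounded derivatives, the standard Lipschitz $L^p$ theory applies, and the $\mathbb P$-moment bounds are then recovered by H\"older together with moment estimates of all orders on the density $\rho^u$ and its reciprocal (the content of Lemma \ref{lem:3.4}); this measure-transfer is also what produces the particular mixture of fourth and second powers in \eqref{eq:1.8} and \eqref{eq:1.15}. If you take that route you must additionally note that the mean-field arguments $\mathbb E[x(t)]$, $\mathbb E[u(t)]$ appearing in the coefficients are expectations under $\mathbb P$, not $\mathbb P^u$, so the fixed point has to be run on the deterministic map $t\mapsto\mathbb E[x(t)]$ while the pathwise estimates are done under $\mathbb P^u$. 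The only alternative is to strengthen the hypotheses (boundedness of $h$, as the paper itself assumes in its sufficient-condition section and as the cited references effectively do), in which case your argument goes through as written; but as a proof of the lemma under Assumption \ref{ass:1.1} alone, the Lipschitz claims and the absorption step are gaps.
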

\begin{proof}
  The proof can be proved
  similar to   Proposition 2.1 in
  \cite{Mou} and Lemma 2
  in \cite{Xu95}.
\end{proof}

For any  given admissible control
$u(\cdot)\in \cal A$  and the corresponding  strong solution
 $( x^u(\cdot),  y^u(\cdot), z_1^u(\cdot), z_2^u(\cdot))$ of  the equation
 \eqref{eq:4},  define a stochastic
 process by
\begin{eqnarray}\label{eq:7}
\begin{split}
  \rho^u(t)
  =\displaystyle
  \exp^{\bigg\{\displaystyle\int_0^t h(s,x^u(s),u(s),\mathbb E[x^u(s)], \mathbb E[u(s)])dY(s)
  -\frac{1}{2} h^2(s,x^u(s),u(s), \mathbb E[x^u(s)], \mathbb E [u(s)])ds\bigg\}},
  \end{split}
\end{eqnarray}
which is  the solution of  the   SDE
\begin{eqnarray} \label{eq:8}
  \left\{
\begin{aligned}
  d \rho^u(t)=&  \rho^u(t) h(s,x^u(s),u(s),\mathbb E[x^u(s)], \mathbb E[u(s)])dY(s)\\
  \rho^u(0)=&1.
\end{aligned}
\right.
\end{eqnarray}

 The following basic result is  on the stochastic process
 $\rho^u(\cdot),$
 \begin{lemma}\label{lem:3.4}
   Suppose that Assumption \ref{ass:1.1} holds. Then for any $u(\cdot)\in \cal A$ and  any $\alpha \geq 2,$ it follows that
\begin{eqnarray}\label{eq:1.9}
\begin{split}
 {\mathbb E} \bigg [ \sup_{{t\in
 \cal T}} | \rho^u(t) |^\alpha \bigg
] \leq K.
\end{split}
\end{eqnarray}
Further, if $ \bar \rho(\cdot)$
is the solution of  \eqref{eq:8}
associated with another
 admissible control $ \bar u(\cdot)\in \cal A,$  we get that
\begin{eqnarray}\label{eq:1.15}
{\mathbb E} \bigg [ \sup_{t\in \cal T} |
\rho^u (t) - \bar \rho (t) |^2
\bigg ]  \leq  K \bigg\{{\mathbb E} \bigg [ \int_0^T| u(t)- \bar u (t)
|^2 dt\bigg ]^{{2}}\bigg\}^{\frac{1}{2}}.
\end{eqnarray}
\end{lemma}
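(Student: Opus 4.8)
The plan is to establish the two bounds separately, in each case combining It\^{o}'s formula, the Burkholder--Davis--Gundy (BDG) and Young inequalities, Gronwall's lemma, and the a priori estimates of Lemma \ref{lem:3.3}. Throughout, abbreviate $h^u_t:=h(t,x^u(t),u(t),\mathbb E[x^u(t)],\mathbb E[u(t)])$; by Assumption \ref{ass:1.1} the coefficient $h$ is uniformly bounded, which is the structural feature of the partial-observation setup that makes the exponential $\rho^u$ tractable. For \eqref{eq:1.9}, note first that \eqref{eq:8} has no drift term, so $\rho^u$ is a strictly positive $\mathbb P$--local martingale, and It\^{o}'s formula applied to $(\rho^u)^\alpha$ (legitimate since $\rho^u>0$) gives
\[
(\rho^u(t))^\alpha = 1 + \alpha\int_0^t (\rho^u(s))^\alpha h^u_s\,dY(s) + \frac{\alpha(\alpha-1)}{2}\int_0^t (\rho^u(s))^\alpha (h^u_s)^2\,ds .
\]
Taking $\sup_{s\le t}$ and then $\mathbb E$, bounding the stochastic integral by BDG and absorbing the factor $\big(\sup_{s\le t}(\rho^u(s))^\alpha\big)^{1/2}$ through Young's inequality, one is left with an inequality of the form
\[
\mathbb E\Big[\sup_{s\le t}(\rho^u(s))^\alpha\Big]\le C + C\int_0^t \mathbb E\Big[\sup_{r\le s}(\rho^u(r))^\alpha\Big]\,ds ,
\]
and Gronwall's lemma yields \eqref{eq:1.9} with $K$ depending only on $\alpha$, $T$ and the bound on $h$, in particular independent of the admissible control.

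For the stability estimate, put $\delta\rho:=\rho^u-\bar\rho$ and $\bar h_t:=h(t,\bar x(t),\bar u(t),\mathbb E[\bar x(t)],\mathbb E[\bar u(t)])$; subtracting the two instances of \eqref{eq:8} shows that $\delta\rho$ solves the linear SDE
\[
d(\delta\rho)(t) = \big(\delta\rho(t)\,h^u_t + \bar\rho(t)(h^u_t-\bar h_t)\big)\,dY(t),\qquad \delta\rho(0)=0 ,
\]
whose source term is $\bar\rho(h^u-\bar h)$. Applying It\^{o}'s formula to $|\delta\rho|^2$, taking $\sup_{s\le t}$ and $\mathbb E$, using BDG, Young and the boundedness of $h^u$, and then Gronwall, one obtains $\mathbb E[\sup_{t\in\mathcal T}|\delta\rho(t)|^2]\le C\,\mathbb E\big[\int_0^T \bar\rho(t)^2\,|h^u_t-\bar h_t|^2\,dt\big]$. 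Split this by Cauchy--Schwarz into $\big(\mathbb E[\sup_t \bar\rho(t)^4]\big)^{1/2}$, which is bounded by \eqref{eq:1.9} with $\alpha=4$, times $\big(\mathbb E\big[\big(\int_0^T|h^u_t-\bar h_t|^2\,dt\big)^2\big]\big)^{1/2}$. Since the derivatives of $h$ are bounded, $h$ is Lipschitz in its spatial arguments, so
\[
|h^u_t-\bar h_t|^2\le C\big(|x^u(t)-\bar x(t)|^2+|u(t)-\bar u(t)|^2+\mathbb E|x^u(t)-\bar x(t)|^2+\mathbb E|u(t)-\bar u(t)|^2\big),
\]
and after integrating, squaring and taking expectations, the $x$--terms are disposed of by the second (stability) estimate of Lemma \ref{lem:3.3}, leaving only a multiple of $\mathbb E[(\int_0^T|u-\bar u|^2\,dt)^2]$; collecting the bounds delivers the claimed inequality.

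I expect the delicate point to be twofold. First, in \eqref{eq:1.9} everything hinges on adequate control of $h$ along the state trajectory: this is exactly where the boundedness hypotheses of Assumption \ref{ass:1.1} are indispensable, since otherwise the exponential $\rho^u$ need not even have finite moments, let alone ones uniform in $u$. Second, and this explains why only the square root of $\mathbb E[(\int_0^T|u-\bar u|^2\,dt)^2]$ appears in the second estimate rather than the full quantity: the Girsanov densities $\rho^u$ and $\bar\rho$ are unbounded, so they cannot simply be pulled out of the time integrals, which forces the Cauchy--Schwarz split above and hence the loss of one power. The rest is bookkeeping — keeping track of the mean-field terms $\mathbb E[\cdot]$ and checking that every appeal to \eqref{eq:1.8}, to the stability estimate of Lemma \ref{lem:3.3} and to \eqref{eq:1.9} yields a constant $K$ that is genuinely uniform over $\mathcal A$.
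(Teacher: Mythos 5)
Your proof is correct and is essentially the argument the paper intends: the paper offers no details, delegating to Proposition 2.1 of Mou--Yong, and that standard proof is exactly your It\^{o}--BDG--Young--Gronwall bound for $\mathbb E[\sup_t(\rho^u(t))^\alpha]$ together with the linear SDE for $\rho^u-\bar\rho$ and the Cauchy--Schwarz split against $\big(\mathbb E[\sup_t\bar\rho(t)^4]\big)^{1/2}$, which also correctly explains the exponent $\tfrac12$ on the right-hand side of \eqref{eq:1.15}. The one caveat is your opening claim that Assumption \ref{ass:1.1} makes $h$ uniformly bounded: it literally only bounds the \emph{derivatives} of $h$ (so $h$ has linear growth), and uniform boundedness of $h$ is genuinely needed for the exponential moment estimate \eqref{eq:1.9}; this is an implicit strengthening of the stated hypothesis, but it is one the paper itself relies on (e.g.\ when asserting that $\rho^u$ is a $\mathbb P$-martingale and applying Girsanov), so it is a defect of the statement rather than of your argument.
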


\begin{proof}
  The proof can be proved similarly to the
  proof of Proposition 2.1 in
  \cite{Mou}.
\end{proof}

Under Assumption \ref{ass:1.1},
$\rho^u(\cdot)$ is
an  $( \Omega,
{\mathscr F}, \{\mathscr{F}_t\}_{t\in {\cal T}}, {\mathbb P} )-$
martingale.  we thus can  introduce a new probability measure $\mathbb P^u$ on $(\Omega, \mathscr F)$ by
\begin{eqnarray}
  d\mathbb P^u=\rho^u(1)d\mathbb P.
\end{eqnarray}
 Then using Girsanov's theorem and \eqref{eq:2}, $(W(\cdot),W^u(\cdot))$ is an
$\mathbb R^2$-valued standard Brownian motion on the new probability
space $(\Omega, \mathscr F, \{\mathscr{F}_t\}_{0\leq t\leq T},\mathbb P^u).$
So $(\mathbb P^u, x^u(\cdot), y^u(\cdot),
z^u_1(\cdot), z^u_2(\cdot), \rho^u(\cdot),
 W(\cdot), W^u(\cdot))$ is a weak
solution on $(\Omega, \mathscr F, \{\mathscr{F}_t\}_{t\in \cal
T})$ of  \eqref{eq:1} and
\eqref{eq:2}.

 Give the cost functional  by
\begin{eqnarray}\label{eq:13}
  \begin{split}
    J(u(\cdot)=\mathbb E^u\bigg[\int_0^Tl(t, x(t), y(t),z_1(t),z_2(t),u(t),
\mathbb E[x(t)],\mathbb E[y(t)],\mathbb E[z_1(t)],\mathbb E[z_2(t)], \mathbb E[u(t)])dt+ \Phi(x(T),\mathbb E[x(T)])+\gamma(y(0))\bigg].
  \end{split}
\end{eqnarray}
 where $\mathbb E^u$ stands for the
 mathematical expectation on $(\Omega, \mathscr F, \{\mathscr{F}_t\}_{0\leq
t\leq T},\mathbb P^u)$ and  the following assumption on $l:
{\cal T} \times \Omega
\times ({\mathbb R}^n \times {\mathbb R}^m
\times {\mathbb R}^m\times {\mathbb R}^m
 \times U )^2\rightarrow {\mathbb R},$ $\Phi: \Omega \times ({\mathbb R}^n)^2  \rightarrow {\mathbb R}$
 and  $\gamma: \Omega \times {\mathbb R}^m \rightarrow {\mathbb R}$
  will be needed:

 \begin{ass}\label{ass:1.2}
 $\Phi$ is ${\mathscr F}_T \otimes ({\mathscr B} ({\mathbb
R}^n))^2 $-measurable, and $\gamma$ is ${
\mathscr F}_0 \otimes {\mathscr B} ({\mathbb
R}^n) $-measurable,$l$ is ${\mathscr P} \otimes ({\mathscr
B} ({\mathbb R}^n) \otimes {\mathscr B} ({\mathbb R}^m)\otimes {\mathscr B} ({\mathbb R}^m) \otimes {\mathscr B} ({\mathbb R}^m)  \otimes {\mathscr B}
(U))^2 $-measurable. For almost all $(t, \omega)\in [0,T]
\times \Omega$, the mappings $l,\Phi,\gamma$
are continuous differentiable with respect to $(x,y, z_1, z_2,u,x',y', z_1', z_2',u')$ with
appropriate growths, respectively.
More precisely, for all $(x,y, z_1, z_2,u,x',y', z_1', z_2',u') \in (\mathbb {R}^n\times \mathbb R^m
\times \mathbb R^m\times \mathbb R^m\times U)^2$ and a.e. $(t, \omega)\in [0,T]
\times \Omega,$  it follows that
\begin{eqnarray*}
\left\{
\begin{aligned}
&
(1+|x|+|y|+|z_1|+|z_2|+|u|
+|x'|+|y'|+|z_1'|+|z_2'|+|u'|)^{-1} (|l_x(t,x,y,z_1,z_2,u,x',y', z_1', z_2',u')|
\\&\quad\quad+|l_y(t,x,y,z_1,z_2,u,x',y', z_1', z_2',u')|+
|l_{z_1}(t,x,y,z_1,z_2,u,x',y', z_1', z_2',u')|
+|l_{z_2}(t,x,y,z_1,z_2,u,x',y', z_1', z_2',u')|
\\&\quad\quad+|l_u(t,x,y,z_1,z_2,u,x',y', z_1', z_2',u')|+|l_{x'}(t,x,y,z_1,z_2,u,x',y', z_1', z_2',u')|+|l_{y'}(t,x,y,z_1,z_2,u,x',y', z_1', z_2',u')|
\\&\quad\quad+
|l_{z_1'}(t,x,y,z_1,z_2,u,x',y', z_1', z_2',u')|+|l_{z_2'}(t,x,y,z_1,z_2,u,x',y', z_1', z_2',u')|
+|l_{u'}(t,x,y,z_1,z_2,u,x',y', z_1', z_2',u')|)
\\&\quad\quad+(1+|x|^2+|y|^2+|z_1|^2
+|z_2|^2
+|u|^2+|x'|^2+|y'|^2+|z_1'|^2
+|z_2'|^2
+|u'|^2)^{-1}|l(t,x,y,z_1,z_2,u,x',y', z_1', z_2',u')|
 \leq C;
\\
& (1+|x|^2)^{-1}|\Phi(x,x')| +(1+|x|)^{-1}|\Phi_x(x,x')|
+(1+|x'|)^{-1}|\Phi_{x'}(x,x')|\leq
C;
\\
& (1+|y|^2)^{-1}|\gamma(y)| +(1+|y|)^{-1}|\gamma_y(y)|\leq
C.
\end{aligned}
\right.
\end{eqnarray*}
\end{ass}
 Under  Assumption \ref{ass:1.1} and
 \ref{ass:1.2},
 by the estimates \eqref{eq:1.8} and
 \eqref{eq:1.9},  it is easy to check that
the cost functional is well-defined.

 Now we pose  an optimal control
  problem of forward-backward
  stochastic differential systems
  with partial information in its weak formulation,
 i.e., with changing
 the reference probability space $(\Omega, \mathscr F, \{\mathscr{F}_t\}_{0\leq
t\leq T},\mathbb P^u),$ as follows.

\begin{pro}
\label{pro:1.1} Seek $\bar{u}(\cdot)\in \cal$ such that
\begin{equation*}  \label{eq:b7}
J(\bar{u}(\cdot))=\displaystyle\inf_{u(\cdot)
\in \cal A}J(u(\cdot)),
\end{equation*}
subject to  \eqref{eq:1}, \eqref{eq:2}
 and \eqref{eq:13}.

\end{pro}
 Using  Bayes' formula, it is easy to check that we can rewrite
 the cost functional \eqref{eq:13} as
\begin{equation}\label{eq:15}
\begin{split}
J(u(\cdot ))=& \mathbb E\displaystyle\bigg[%
\int_{0}^{T}\rho^u(t)l(t, x(t), y(t),z_1(t),z_2(t),u(t),
\mathbb E[x(t)],\mathbb E[y(t)],\mathbb E[z_1(t)],\mathbb E[z_2(t)], \mathbb E[u(t)])dt +\rho^u(T)\Phi(x(T),\mathbb E[x(T)])
+\gamma(y(0))\bigg].
\end{split}
\end{equation}
  Therefore, Problem \ref{pro:1.1} can be translated
  into  the following  equivalent optimal control problem in
  its strong formulation, i.e., without changing the  reference
probability space $(\Omega, \mathscr F, \{\mathscr{F}_t\}_{0\leq t\leq T},\mathbb P).$  Here we will regard $\rho^u(\cdot)$  as
an additional  state process besides the
state process $(x^u(\cdot), y^u(\cdot),
z^u_1(\cdot), z^u_2(\cdot)).$

\begin{pro}
\label{pro:1.2} Seek $\bar{u}(\cdot)\in \cal A$ such that
\begin{equation*}  \label{eq:b7}
J(\bar{u}(\cdot))=\displaystyle\inf_{u(\cdot)
\in \cal A}J(u(\cdot)),
\end{equation*}
 subject to \eqref{eq:15}
 and  the following
 state equation
\begin{equation}
\displaystyle\left\{
\begin{array}{lll}
dx(t)=&(b-\sigma_2h)(t, x(t), u(t),\mathbb  E[x(t)],\mathbb E[u(t)])dt+ \sigma_1(t, x(t), u(t), \mathbb E[x(t)],\mathbb E[u(t)]) dW(t)\\&\quad+\sigma _2(t, x(t), u(t), \mathbb E[x(t)],\mathbb E[u(t)]) dY(t),
\\
dy(t)=&(f(t, x(t), y(t),z_1(t),z_2(t),u(t),
\mathbb E[x(t)],\mathbb E[y(t)],\mathbb E[z_1(t)],\mathbb E[z_2(t)], \mathbb E[u(t)])
-z_2(t)h(t, x(t), u(t),\mathbb  E[x(t)],\mathbb E[u(t)])dt\\&+ z_1(t) dW(t)
+  z_2(t) dY(t),\\
d \rho(t)=&  \rho(t) h(s,x(s), u(s), \mathbb E[x^u(s)],\mathbb E[ u(s)])dY(s),\\
  \rho^u(0)=&1,
\\
x(0)=&x,\\
y(T)=&\Phi(x(T),\mathbb E[x(T)]).
\end{array}%
\right.  \label{eq:3.7}
\end{equation}
\end{pro}
Here  we call $\bar{u}(\cdot)\in \cal A$ satisfying above  an optimal
control process of Problem \ref{pro:1.2} and the corresponding state
process $(\bar x(\cdot),
\bar y(\cdot),
\bar z_1(\cdot), \\\bar z_2(\cdot),
\bar \rho(\cdot))$  the optimal
state process. Correspondingly $(\bar{u}(\cdot);\bar x(\cdot),
\bar y(\cdot),
\bar z_1(\cdot), \bar z_2(\cdot),
\bar \rho(\cdot))$ is said to be  an optimal pair of Problem \ref{pro:1.2}.

\section{A Variation Formulation for
the Cost Functional}

In this section, we will establish
a variation formulation for the cost
functional by using the
Hamiltonian and adjoint process.

We first define  the Hamiltonian by
 ${\cal H}: \Omega \times {\cal T} \times (\mathbb R^n \times \mathbb R^m \times \mathbb R^{m}
  \times \mathbb R^{m}\times U)^2
  \times \mathbb R^{m}\times
\mathbb R^n\times \mathbb R^n
\times \mathbb R^n\times \mathbb R\rightarrow \mathbb R$  as follows:
\begin{eqnarray}\label{eq6}
&& { H} (t, x, y, z_1,z_2,u, x', y', z_1',z_2', u',k, p,q_1, q_2, R_2) \nonumber \\
&& = l (t, x, y, z_1,z_2,u, x', y', z_1',z_2', u')
+ \langle b (t,x,u,x',u'),  p \rangle
+ \langle \sigma_1(t, x,u,x',u'), q_1\rangle + \langle \sigma_2 (t, x,u,x',u'), q_2\rangle \nonumber
\\&&\quad +
\langle f(t, x, y, z_1,z_2, u, x', y', z_1',z_2',u'), k\rangle  +\langle R_2, h(t,x,u,x',u')\rangle \ .
\end{eqnarray}

For any given admissible control pair $(\bar u(\cdot); \bar x(\cdot),
 \bar y(\cdot), \bar z_1(\cdot),
 \bar z_2(\cdot)), $  we define the
 the corresponding adjoint process as the
 solution as the solution of the following FBSDE:
\begin{numcases}{}\label{eq:18}
\begin{split}
d\bar r(t)&=- l(t,\bar\Theta(t), \mathbb E[\bar \Theta(t)],\bar u(t),\mathbb E[\bar u(t))dt
 +\bar R_1\left(
t\right)  dW\left(  t\right) +{\bar R}_{2}\left( t\right) dW^{
\bar u}\left( t\right),
\\
d\bar p\left(  t\right)
 &=-\bigg[{\cal H}_x (t,\bar\Theta(t),\bar u(t), \mathbb E[\bar \Theta(t)],\mathbb E[\bar u(t))],
 \bar\Lambda(t), \bar R_2(t))+\frac{1}{\bar\rho(t)}\mathbb E^{\bar u}[{\cal H}_{x'} (t,\bar\Theta(t),\bar u(t), \mathbb E[\bar \Theta(t)],\mathbb E[\bar u(t))],
 \bar\Lambda(t), \bar R_2(t))]\bigg]dt
 \\&\quad+\bar q_1
\left(  t\right)  dW\left(  t\right)  +{\bar q}_{2}\left( t\right) dW^{
\bar u}\left( t\right),
\\
d\bar k\left(  t\right)  &=-\bigg[{\cal H}_y (t,\bar\Theta(t),\bar u(t), \mathbb E[\bar \Theta(t)],\mathbb E[\bar u(t))],
 \bar\Lambda(t), \bar R_2(t))+\frac{1}{\bar\rho(t)}\mathbb E^{\bar u}[{\cal H}_{y'} (t,\bar\Theta(t), \bar u(t),\mathbb E[\bar \Theta(t)],\mathbb E[\bar u(t))],
 \bar\Lambda(t), \bar R_2(t))]\bigg]dt
\\&\quad-
\bigg[{\cal H}_{z_1} (t,\bar\Theta(t),\bar u(t), \mathbb E[\bar \Theta(t)],\mathbb E[\bar u(t))],
 \bar\Lambda(t), \bar R_2(t))+\frac{1}{\bar\rho(t)}\mathbb E^{\bar u}[{\cal H}_{z_1'} (t,\bar\Theta(t),\bar u(t), \mathbb E[\bar \Theta(t)],\mathbb E[\bar u(t))],
 \bar\Lambda(t), \bar R_2(t))]\bigg]  dW\left(  t\right)
  \\&\quad-\bigg[{\cal H}_{z_2} (t,\bar\Theta(t),\bar u(t), \mathbb E[\bar \Theta(t)],\mathbb E[\bar u(t))],
 \bar\Lambda(t), \bar R_2(t))+\frac{1}{\bar\rho(t)}\mathbb E^{\bar u}[{\cal H}_{z_2'} (t,\bar\Theta(t),\bar u(t), \mathbb E[\bar \Theta(t)],\mathbb E[\bar u(t))],
 \bar\Lambda(t), \bar R_2(t))]\bigg]dW^{
\bar u}\left( t\right),
\\ \bar p(T)&=\Phi_x(\bar x(T),\mathbb E[\bar x (T)])+\frac{1}{\bar \rho(T)}\mathbb E^{\bar u} \left[\Phi_x(\bar x(T),\mathbb E [\bar x(T)])\right]\\&\quad-\bigg[\phi_x^*(\bar x(T),\mathbb E [\bar x(T)])\bar k(T)+\frac{1}{\bar \rho(T)}\mathbb E^{\bar u} \left[\phi_x^*(\bar x(T),\mathbb E [\bar x(T)] )\bar k(T)\right]\bigg],
\\ \bar r(T)&=\Phi(\bar x(T),\mathbb E [\bar x(T)])+\frac{1}{\bar \rho(T)}\mathbb E^{\bar u} \left[\Phi(\bar x(T),\mathbb E [\bar x(T)])\right],
\\ \bar k(0)&=-\gamma_y(\bar y(0)).
\end{split}
\end{numcases}
Here the following short hand notation
have been used:
\begin{eqnarray}
  \begin{split}
   &\bar\Theta(t):=(\bar x(t),
   \bar y(t), \bar z_1(t),\bar z_2(\cdot)),
  \\ &\mathbb E\big[\bar\Theta(t)\big]:=( \mathbb E[\bar x(t)],
   \mathbb E[\bar y(t)],
   \mathbb E[\bar z_1(t)],
   \mathbb E[\bar z_2(\cdot))]),
\\&\bar\Lambda(t):=(\bar k(t), \bar p(t), \bar q_1(t), \bar q_2(\cdot)),
\\&\bar\Gamma(t):=(\bar r(t),\bar R_1(t), \bar R_2(\cdot)).
  \end{split}
\end{eqnarray}
\begin{eqnarray}\label{eq:19}
\begin{split}
  &{\cal H}_a( t,\bar\Theta(t),\bar u(t), \mathbb E[\bar \Theta(t)],\mathbb E[\bar u(t)),
 \bar\Lambda(t), \bar R_2(t))
 \\=&{ {H}}_{a}\left(   t,\bar\Theta(t),\bar u(t), \mathbb E[\bar \Theta(t)],\mathbb E[\bar u(t)),
 \bar\Lambda(t),  \bar R_2(t)-\sigma_2^*(t, \bar x(t),\bar u(t),\mathbb E[\bar x(t)],
\mathbb E[\bar u(t)] )\bar p(t)-\bar z_2 ^*(t)
  \bar k(t) \right),
  \end{split}
\end{eqnarray}
where $a=x, y,z_1, z_2,u,x', y',z'_1, z_2',u'.$
 Here the FBSDE  \eqref{eq:18}
is said to be  the adjoint equation whose solution
consists of  an 7-tuple process $(\bar p(\cdot),\bar q_1(\cdot), \bar { q}_2(\cdot),\bar k(\cdot),\bar r(\cdot),\bar R_1(\cdot),\bar R_2(\cdot ) ).$
 In view of  Assumptions \ref{ass:1.1} and
\ref{ass:1.2},  from Lemma 2
  in \cite{Xu95} and  Proposition 2.1 in
  \cite{Mou},  the adjoint equation \eqref{eq:18} has
a unique strong solution $(\bar p(\cdot),\bar q_1(\cdot), \bar { q}_2(\cdot),\bar k(\cdot),\bar r(\cdot),\bar R_1(\cdot),\bar R_2(\cdot ) )\in S_{\mathscr{F}}^4(0,T;
\mathbb R^n)\times
M_{\mathscr{F}}^2(0,T;L^2 (0,T; \mathbb R^n)) \times M_{\mathscr{F}}^2(0,T;L^2 (0,T; \mathbb R^n))\times S_{\mathscr{F}}^4(0,T;
\mathbb R^m)\times S_{\mathscr{F}}^4(0,T;
\mathbb R)\times M_{\mathscr{F}}^2(0,T;L^2 (0,T; \mathbb R))\times M_{\mathscr{F}}^2(0,T;L^2 (0,T; \mathbb R^n)),
$  also said to be  the adjoint process corresponding to
the admissible pair $(\bar{u}(\cdot);\bar x(\cdot),
\bar y(\cdot),
\bar z_1(\cdot), \bar z_2(\cdot),
\bar \rho(\cdot))$.

 For any two  admissible pairs $({u}(\cdot);\Theta^u(\cdot),
\rho^u(\cdot))=({u}(\cdot);x^u(\cdot),
 y^u(\cdot),
 z^u_1(\cdot),  z^u_2(\cdot),
\rho^u(\cdot))$ and $({u}(\cdot);\bar\Theta(\cdot),
\bar\rho(\cdot))=(\bar{u}(\cdot);\bar x(\cdot),
\bar y(\cdot),
\bar z_1(\cdot), \\\bar z_2(\cdot),
\bar \rho(\cdot))$,  we give a presentation for the difference $J (u (\cdot)) - J (\bar u (\cdot))$
in terms of the adjoint process $(\bar \Lambda(\cdot),\bar \Gamma(\cdot) )=(\bar k(\cdot),\bar p(\cdot),\bar q_1(\cdot), \bar { q}_2(\cdot),\\\bar r(\cdot),\bar R_1(\cdot),\bar R_2(\cdot ) )$ and the Hamiltonian ${\cal H}$ and
as well as other relevant expressions.

In the following, to simplify  the  notation , we denote by :
\begin{eqnarray}\label{eq:3.16}
\left\{
\begin{aligned}
& \gamma^u (0)
= \gamma( y^u (0)) \ ,
 \bar \gamma(0) =
 \gamma( \bar y (0)), \
 \\& \phi^u (T)
= \phi( x^u (T),\mathbb E[x^u(T)]),
 \bar \phi(T) =  \phi( \bar x (T),\mathbb
 E[\bar x(T)]) \ ,
\\& \Phi^u (T)
= \Phi( x^u (T),\mathbb E[x^u(T)]),
 \bar \Phi(T) =  \Phi( \bar x (T),
 \mathbb E[\bar x(T)]),\\
& \alpha^u (t) =
 \alpha ( t, x^u (t),u (t), \mathbb E[x^u(t)],\mathbb E[u(t)]),\\
& \bar \alpha (t) = \alpha( t,
 \bar x (t),\bar
u(t),\mathbb E[\bar x(t)],\mathbb E[\bar u(t)] ), \quad \alpha =  b, \sigma_1,
\sigma_2, h,
\\
& \beta^u (t) = \beta ( t, \Theta^u (t),
u (t),\mathbb E[\Theta^u(t)],
\mathbb E[u(t)] ),\\
& \bar \beta (t) = \beta( t, \bar\Theta (t),\bar u(t),
\mathbb E[\bar\Theta(t)],
\mathbb E[\bar u(t)] ), \quad \beta =  f, l,\\
& \bar {\cal H} (t) = {\cal H}( t,\bar\Theta(t),\bar u(t), \mathbb E[\bar \Theta(t)],\mathbb E[\bar u(t))],
 \bar\Lambda(t), \bar R_2(t)).
\end{aligned}
\right.
\end{eqnarray}

\begin{lemma}\label{lem4}
Suppose that Assumptions \ref{ass:1.1} and \ref{ass:1.2} holds. Using the abbreviation \eqref{eq:19} and
\eqref{eq:3.16},  it follows that
\begin{eqnarray}\label{eq:21}
&&J (u (\cdot)) - J (\bar u(\cdot))\nonumber
\\ &=& {\mathbb E} ^{\bar u}
\bigg[\int_0^T \bigg \{ { \cal H}(t,\Theta^u(t),u(t), \mathbb E[\Theta^u(t)], \mathbb E[u(t)],
 \bar\Lambda(t),\bar R_2(t)) - {\bar{\cal H}} (t)\nonumber
   \\&&\quad\quad\quad- \big < {\bar{\cal H}_{x'}} (t)+\frac{1}{\bar\rho(t)}\mathbb E^{\bar u}[{\cal H}_{x'} (t)],x^u (t) - \bar
x (t)
\big>\nonumber
\\&&\quad\quad\quad- \big <{ \bar{\cal H}}_y (t)+\frac{1}{\bar\rho(t)}\mathbb E^{\bar u}[{\cal H}_{y'} (t)],
y^u (t) - \bar
y (t)\big>
\\&&\quad\quad\quad- \big <{ \bar{\cal H}}_{z_1} (t)+\frac{1}{\bar\rho(t)}\mathbb E^{\bar u}[{\cal H}_{z_1'} (t)],
z^u_1 (t) - \bar
z_1 (t)\big>\nonumber
\\&&\quad\quad\quad- \big <{ \bar{\cal H}}_{z_2} (t)+\frac{1}{\bar\rho(t)}\mathbb E^{\bar u}[{\cal H}_{z_2'} (t)],
z^u_2 (t) - \bar
z_2 (t)\big>
\nonumber
\\&&\quad\quad\quad-
 \langle (\sigma_2^u(t)-\bar{\sigma}_2(t))
 (h^u(t)-\bar
 h(t)), \bar p(t)\rangle\nonumber
 \\&&\quad\quad\quad-
 \langle (z_2^u(t)-\bar{z}_2(t))
 (h^u(t)-\bar
 h(t)), \bar k(t)\rangle\bigg\} dt\bigg]
\nonumber \\
&&\quad\quad\quad + {\mathbb E}^{\bar u} \big [ \Phi^u(T)
 - \bar \Phi (T) -  \la
x^u (T) - \bar x (T), \bar\Phi_x(T)+\frac{1}{\bar \rho(T)}\mathbb E^{\bar u} \left[\bar\Phi_{x'}(T)\right]
\ra \big ]
\nonumber \\
&&\quad\quad\quad - {\mathbb E}^{\bar u}
\big [ \la \phi^u (T) - \bar \phi (T), \bar k(T)\ra - \langle
\bar \phi_x^*(T)\bar k(T)+\frac{1}{\bar \rho(T)}\mathbb E^{\bar u} \left[\bar\phi_{x'}^*(T)\bar k(T)\right],
x^u (T) - \bar x (T) \rangle \big ]
\nonumber \\
&&\quad\quad\quad + {\mathbb E}
 \big [ \gamma^u(0) - \bar \gamma (0)
 - \la
y^u (0)
- \bar y (0), \bar \gamma_y (0)  \ra\big ]
\nonumber
\\&&\quad\quad\quad+\mathbb E\bigg[\int_0^T \bar R_2(t)(\rho^u(t)-\bar\rho(t))(h^u(t)-\bar
 h(t))dt\bigg] \nonumber
\\&&\quad\quad\quad+\mathbb E\bigg[\int_0^T(l^u(t)-\bar l(t))( \rho^u(t)-\bar
\rho(t))dt\bigg]\nonumber
\\&&\quad\quad\quad+\mathbb E \bigg[ ( \rho^u (T) - \bar \rho(T)) (\Phi^u (T) -
\bar \Phi (T))\bigg].
\end{eqnarray}
\end{lemma}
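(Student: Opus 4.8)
The proof is a duality computation based on It\^o's formula; consistently with the programme announced in the introduction, no Taylor expansion of the state or the cost is used, only the exact dynamics \eqref{eq:3.7}, the adjoint equation \eqref{eq:18} and the definition \eqref{eq6} of the Hamiltonian. Starting from the strong form \eqref{eq:15} of the cost I would first decompose
\[
J(u(\cdot))-J(\bar u(\cdot))=\mathbb E\Big[\int_0^T(\rho^u l^u-\bar\rho\bar l)\,dt\Big]+\mathbb E\big[\rho^u(T)\Phi^u(T)-\bar\rho(T)\bar\Phi(T)\big]+\mathbb E\big[\gamma^u(0)-\bar\gamma(0)\big],
\]
set $\hat x=x^u-\bar x$, $\hat y=y^u-\bar y$, $\hat z_i=z_i^u-\bar z_i$, $\hat\rho=\rho^u-\bar\rho$ and $\hat\alpha=\alpha^u-\bar\alpha$ for $\alpha=b,\sigma_1,\sigma_2,h,f,l$, and record the linear equations satisfied by $\hat x,\hat y,\hat\rho$ obtained by subtracting the two copies of \eqref{eq:3.7}. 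I will use freely that $W,Y$ are Brownian motions under $\mathbb P$, that $W^{\bar u}(t)=Y(t)-\int_0^t\bar h(s)\,ds$ is a Brownian motion under $\mathbb P^{\bar u}$ with $d\mathbb P^{\bar u}=\bar\rho(T)\,d\mathbb P$, and the identity $\mathbb E^{\bar u}[\xi(t)/\bar\rho(t)]=\mathbb E[\xi(t)]$ for $\mathscr F_t$-measurable $\xi(t)$, together with the martingale property of $\rho^u$, to move quantities between the two expectations.

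\textbf{Key steps.} For the initial cost, split $\gamma^u(0)-\bar\gamma(0)=[\gamma^u(0)-\bar\gamma(0)-\langle\hat y(0),\bar\gamma_y(0)\rangle]+\langle\hat y(0),\bar\gamma_y(0)\rangle$ and use the boundary datum $\bar k(0)=-\gamma_y(\bar y(0))$ to turn the second summand into $-\langle\hat y(0),\bar k(0)\rangle$. Then apply It\^o's product rule to $\langle\bar p(t),\hat x(t)\rangle$ and to $\langle\bar k(t),\hat y(t)\rangle$, integrate over $[0,T]$ and take $\mathbb E^{\bar u}$: the stochastic integrals vanish, $\hat x(0)=0$, and the boundary data of \eqref{eq:18} together with $y^u(T)=\phi(x^u(T),\mathbb E[x^u(T)])$ and $\bar y(T)=\phi(\bar x(T),\mathbb E[\bar x(T)])$ produce the boundary contributions $\Phi^u(T)-\bar\Phi(T)$, $\langle\phi^u(T)-\bar\phi(T),\bar k(T)\rangle$, $\gamma^u(0)-\bar\gamma(0)$ and their first-order corrections, including the mean-field factors $\tfrac1{\bar\rho(T)}\mathbb E^{\bar u}[\cdot]$. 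The drifts of $\bar p$ and $\bar k$ supply $-\langle\bar{\mathcal H}_x(t)+\tfrac1{\bar\rho(t)}\mathbb E^{\bar u}[\bar{\mathcal H}_{x'}(t)],\hat x(t)\rangle$ and its three analogues for $(y,z_1,z_2)$, while $\langle\bar p,d\hat x\rangle$, $\langle\bar k,d\hat y\rangle$ and the quadratic covariations with $(\bar q_1,\bar q_2)$ furnish $\langle\bar p,\hat b\rangle+\langle\bar q_1,\hat\sigma_1\rangle+\langle\bar q_2,\hat\sigma_2\rangle+\langle\bar k,\hat f\rangle$ plus the contributions of the $-\sigma_2h$ and $-z_2h$ terms of \eqref{eq:3.7}; the bilinear excesses left over beyond what can be assembled into the Hamiltonian are exactly $(\sigma_2^u-\bar\sigma_2)(h^u-\bar h)$ paired with $\bar p$ and $(z_2^u-\bar z_2)(h^u-\bar h)$ paired with $\bar k$. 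The terms still carrying $\rho^u$ or $\bar\rho$ — namely $\mathbb E[\int_0^T(\rho^u l^u-\bar\rho\bar l)\,dt]$ after peeling off $\mathbb E^{\bar u}[\int_0^T\hat l\,dt]$, and $\mathbb E[\rho^u(T)\Phi^u(T)-\bar\rho(T)\bar\Phi(T)]$ after peeling off $\mathbb E^{\bar u}[\Phi^u(T)-\bar\Phi(T)]$ — are treated by a further application of It\^o's formula involving the auxiliary adjoint $(\bar r,\bar R_1,\bar R_2)$ (generator $\bar l$, terminal value $\bar\Phi(T)+\tfrac1{\bar\rho(T)}\mathbb E^{\bar u}[\bar\Phi(T)]$) together with the change-of-measure identities; this yields precisely the last three remainder lines of \eqref{eq:21}, carrying $\bar R_2\hat\rho(h^u-\bar h)$, $(l^u-\bar l)\hat\rho$ and $(\rho^u(T)-\bar\rho(T))(\Phi^u(T)-\bar\Phi(T))$.

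\textbf{Assembling, and the main obstacle.} Finally, using \eqref{eq6} with $R_2$ replaced by $\bar R_2-\sigma_2^*\bar p-\bar z_2^*\bar k$ as in \eqref{eq:19} — which turns $\langle b,p\rangle+\langle R_2,h\rangle$ into $\langle b-\sigma_2h,p\rangle+\langle R_2,h\rangle$ and $\langle f,k\rangle$ into $\langle f-z_2h,k\rangle$ — all contributions linear in the state differences collapse into $\mathcal H(t,\Theta^u(t),u(t),\mathbb E[\Theta^u(t)],\mathbb E[u(t)],\bar\Lambda(t),\bar R_2(t))-\bar{\mathcal H}(t)$ minus its first-order part, so that the sum of everything is exactly the right-hand side of \eqref{eq:21}. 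The main obstacle is organizational rather than conceptual: one must carry out these cancellations while scrupulously tracking which terms live under $\mathbb E$ and which under $\mathbb E^{\bar u}$, repeatedly invoking $\mathbb E^{\bar u}[\xi(t)/\bar\rho(t)]=\mathbb E[\xi(t)]$ and the martingale property of $\rho^u$, and while verifying that the mean-field corrections produced by the adjoint drifts match those arising from the $\mathbb E[\cdot]$-arguments of the coefficients, through Fubini identities such as $\mathbb E^{\bar u}[\langle\hat x(t),\tfrac1{\bar\rho(t)}\mathbb E^{\bar u}[\bar{\mathcal H}_{x'}(t)]\rangle]=\langle\mathbb E[\hat x(t)],\mathbb E^{\bar u}[\bar{\mathcal H}_{x'}(t)]\rangle$. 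All interchanges of expectation and integration, and the vanishing of the stochastic integrals, are justified by the integrability in Assumptions \ref{ass:1.1}--\ref{ass:1.2} and the estimates of Lemmas \ref{lem:3.3}--\ref{lem:3.4}.
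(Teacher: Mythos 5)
Your proposal is correct and follows essentially the same route as the paper: the same decomposition of $J(u(\cdot))-J(\bar u(\cdot))$ via the strong formulation, the It\^o duality applied to $\langle \bar p(t), x^u(t)-\bar x(t)\rangle+\langle \bar k(t), y^u(t)-\bar y(t)\rangle$ under $\mathbb E^{\bar u}$, a further It\^o computation with $(\rho^u(t)-\bar\rho(t))\bar r(t)$ to produce the remainder terms, and the shift $\bar R_2 \mapsto \bar R_2-\sigma_2^*\bar p-\bar z_2^*\bar k$ from \eqref{eq:19} to assemble everything into the Hamiltonian difference. No substantive deviation from the paper's argument.
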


\begin{proof}

In view of   the definition of the cost function $J(u(\cdot)),$
we get that

\begin{eqnarray}\label{eq:26}
\begin{split}
 J(u(\cdot))-J(\bar u(\cdot))=&
 \mathbb E^u\bigg[\int_0^Tl^u(t)dt+ \Phi^u(T)+\gamma^u(0)\bigg]-\mathbb E^{\bar u}\bigg[\int_0^T\bar l(t)dt+ \bar\Phi(T)+\bar\gamma(0)\bigg]
 \\=& \mathbb E\bigg[\int_0^T (\rho^u(t)l^u(t)-\bar\rho(t)\bar l(t))dt\bigg]+ \mathbb E[\rho^u(T)\Phi^u(T)-\bar \rho (t)\bar \Phi(T)]
 +\mathbb E[\gamma^u(0)-\bar \gamma(0)]
 \\=& \mathbb E^{\bar u}
 \bigg[\int_0^T [l^u(t)-\bar l(t)]dt
 \bigg]+ \mathbb E^{\bar u}[\Phi^u(T)-\bar \Phi(T)]+ \mathbb E\bigg[\int_0^T (\rho^u(t)-\bar\rho(t)) l^u(t)dt\bigg]
\\&+ \mathbb E[(\rho^u(T)-\bar \rho (T))\Phi^u(T)]
 +\mathbb E[\gamma^u(0)-\bar \gamma(0)].
\end{split}
\end{eqnarray}

On the other hand, by \eqref{eq:18},
 it follows that
$(\bar p(\cdot),\bar q_1(\cdot),\bar q_2(\cdot),\bar k(\cdot) )$ satisfies the
following   FBSDE

\begin{numcases}{}\label{eq:3.3}
\begin{split}
d\bar p\left(  t\right)
 &=-\bigg[{ \bar{\cal H}}_x (t)+\frac{1}{\bar\rho(t)}\mathbb E^{\bar u}[{\cal H}_{x'} (t)]\bigg]dt
 +\bar q_1
\left(  t\right)  dW\left(  t\right)  +{\bar q}_{2}\left( t\right) dW^{
\bar u}\left( t\right),
\\
d\bar k\left(  t\right)  &=-\bigg[{ \bar{\cal H}}_y (t)+\frac{1}{\bar\rho(t)}\mathbb E^{\bar u}[{\cal H}_{y'} (t)]\bigg]dt-
\bigg[{ \bar{\cal H}}_{z_1} (t)+\frac{1}{\bar\rho(t)}\mathbb E^{\bar u}[{\cal H}_{z_1'} (t)]\bigg] dW\left(  t\right)
  \\&\quad\quad-\bigg[{ \bar{\cal H}}_{z_2} (t)+\frac{1}{\bar\rho(t)}\mathbb E^{\bar u}[{\cal H}_{z_2'} (t)]\bigg]dW^{
\bar u}\left( t\right),
\\ \bar p(T)&=\bar\Phi_x(T)
+\frac{1}{\bar \rho(T)}\mathbb E^{\bar u} \left[\bar\Phi_{x'}(T)\right]
-\bigg[\bar \phi_x^*(T)\bar k(T)+\frac{1}{\bar \rho(T)}\mathbb E^{\bar u} \left[\bar \phi_{x'}^*(T)\bar k(T)\right]\bigg],
\\ \bar k(0)&=-\bar \gamma_y(0),
\end{split}
\end{numcases}
and  $(\bar r(\cdot),
\bar R_1(\cdot),
\bar R_2(\cdot))$  solves the following BSDE

\begin{numcases}{}\label{eq:3.3}
\begin{split}
d\bar r(t)&=-[\bar l(t)+\bar R_2(t)\bar h(t)]dt+\bar R_1\left(
t\right) dW\left(  t\right) +{\bar R}_{2}\left( t\right) dY(t),
\\ \bar r(T)&=\bar\Phi(T).
\end{split}
\end{numcases}
Moreover, by \eqref{eq:2}, it is easy to check that the $(x^u(\cdot), y^u(\cdot), z^u_1(\cdot),z^u_2(\cdot))$ satisfies the following FBSDE:

\begin{eqnarray}
\left\{
\begin{aligned}
dx(t)=&\big[b^{u}(t)+\sigma _2^u(t)(\bar  h(t)-h^u(t))\big]dt+
\sigma^u_1(t) dW(t)+\sigma _2^u(t) dW^{\bar u}(t)
\\
dy(t)=&\big[f^u(t)+z _2(t)( \bar h(t)- h^u(t))\big]dt+ z_1(t) dW(t)+  z_2(t) dW^{\bar u}(t)\\
x(0)=&x,\\
y(T)=&\phi^u(T)-\bar\phi(T)
\end{aligned}
\right.
\end{eqnarray}
Therefore $(x^u(t)-\bar x(t),
y^u(t)-\bar y(t), z^u_1(t)-\bar z_1(t),
z^u_2(t)-\bar z_2(t))$ solves the following FBSDE:

\begin{eqnarray}
\left\{
\begin{aligned}
dx(t)-\bar x(t)=&\big[b^u(t)-\bar b(t)+\sigma _2^u(t)( h^{\bar u}(t)-h^u(t))\big]dt+ [\sigma^u_1(t)-\bar\sigma_1(t)] dW(t)+[\sigma _2^u(t)-\bar\sigma _2(t)]dW^{\bar u}(t)
\\
dy(t)-\bar y(t)=&\big[f^u(t)-\bar f(t) +z _2(t)( \bar h(t)-h^u(t))\big]dt+ [z_1(t)-\bar z_1(t)] dW(t)+  [z_2(t)-\bar z_2(t)] dW^{\bar u}(t)\\
x(0)-\bar x(0)=&0,\\
y(T)-\bar y(T)=&\phi^u(T)
-\bar\phi(T).
\end{aligned}
\right.
\end{eqnarray}

By the definition of $\cal H,$
it follows that

\begin{eqnarray}\label{eq:27}
  \begin{split}
   \mathbb E^{\bar u}\bigg[\int_0^T (l^u(t)-\bar l(t))dt\bigg]=&{\mathbb E} ^{\bar u}
\bigg[\int_0^T \bigg ( { \cal H}(t,\Theta^u(t),u(t), \mathbb E[\Theta^u(t)], \mathbb E[u(t)],
 \bar\Lambda(t),\bar R_2(t))
  - {\bar{\cal H}} (t)\bigg)dt\bigg]
\\&- \mathbb E^{\bar u}\bigg[\int_0^T \bigg (\langle \bar p(t), b^u (t) - \bar b (t)\rangle + \langle \bar q_1(t), \sigma_1^u(t) - \bar \sigma_1 (t)\rangle
+\langle \bar  q_2(t), \sigma _2^u(t) - {\bar \sigma}_2 (t)
\rangle
\\&+ \langle \bar k(t),  f^u(t) - \bar f (t)\rangle+\langle \bar R_2(t)-\bar \sigma_2^*(t)\bar p(t)-\bar z_2 ^*(t)\bar k(t),h^u(t)-\bar h(t)\rangle \bigg)dt\bigg]
  \end{split}
\end{eqnarray}

 By using It\^{o} formula to $\la \bar p(t), x^u(t)-\bar x(t)\ra
+\la \bar k(t), y^u(t)-\bar y(t)\ra$ and
 taking expectation with respect to $P^{\bar u}$,
  we obtain that

\begin{eqnarray}
   && \mathbb E^{\bar u} \bigg[ \la \bar\Phi_x(T)
+\frac{1}{\bar \rho(T)}\mathbb E^{\bar u} \left[\bar\Phi_{x¡¯}(T)\right]
-\big[\bar \phi_x^*(T)\bar k(T)
+\frac{1}{\bar \rho(T)}\mathbb E^{\bar u} \left[\bar \phi_{x'}^*(T)\bar k(T)\right]\big], x^u(T)-\bar x(T)\ra\bigg]+\mathbb E^{\bar u} \big[ \la \bar k(T), \phi^u(T)-\bar \phi(T)\ra\big]\nonumber
\\=&&\mathbb E^{\bar u}\bigg[\int_0^T \bigg (\langle \bar p(t), b^u (t) - \bar b (t)\rangle + \langle \bar q_1(t), \sigma_1^u (t) - \bar \sigma_1 (t)\rangle
+ \langle \bar q_2(t), \sigma _2^u(t) - {\bar \sigma}_2 (t)\rangle
\nonumber
\\&&+ \langle \bar k(t), f^u(t) - \bar f (t)\rangle
+\langle \bar p(t), \sigma_2^u(t)(\bar h(t)- h^u(t))\rangle +\langle \bar k(t), z_2^u(t)(\bar h(t)- h^u(t))\rangle \bigg)dt
\bigg]
\nonumber
\\&&-\mathbb E^{\bar u} \bigg[\int_0^T \la { \bar{\cal H}}_x (t)+\frac{1}{\bar\rho(t)}\mathbb E^{\bar u}[{\cal H}_{x'} (t)], x^u (t) - \bar
x (t)\ra dt\bigg]
\nonumber
\\&&-\mathbb E^{\bar u} \bigg[\int_0^T \la { \bar{\cal H}}_y (t)+\frac{1}{\bar\rho(t)}\mathbb E^{\bar u}[{\cal H}_{y'} (t)], y^u (t) - \bar
y (t)\ra dt\bigg]
\nonumber
\\&&-\mathbb E^{\bar u}
 \bigg[\int_0^T \la { \bar{\cal H}}_{z_1} (t)+\frac{1}{\bar\rho(t)}\mathbb E^{\bar u}[{\cal H}_{z_1'} (t)], z_1^u (t) - \bar
z_1 (t)\ra dt\bigg]
\nonumber
\\&&-\mathbb E^{\bar u} \bigg[\int_0^T \la { \bar{\cal H}}_{z_2} (t)+\frac{1}{\bar\rho(t)}\mathbb E^{\bar u}[{\cal H}_{z_2'} (t)], z_2^u (t) - \bar
z_2 (t)\ra dt\bigg]
\nonumber
\\&&-{\mathbb E}
 \big [  \left <
y^u (0)
- \bar y (0), \bar \gamma_y (0)  \right
> \big ]
\end{eqnarray}

which implies that

\begin{eqnarray} \label{eq:29}
   &&\mathbb E^{\bar u}\bigg[\int_0^T \bigg (\langle \bar p(t), b^u (t) - \bar b (t)\rangle + \langle \bar q_1(t), \sigma_1^u (t) - \bar \sigma_1 (t)\rangle
+ \langle \bar q_2(t), \sigma _2^u(t) - {\bar \sigma}_2 (t)\rangle+ \langle \bar k(t), f^u(t) - \bar f (t)\rangle \bigg)dt\bigg]\nonumber
\\=&& \mathbb E^{\bar u} \big[ \la \bar\Phi_x(T)
+\frac{1}{\bar \rho(T)}\mathbb E^{\bar u} [\bar\Phi_x(T)] , x^u(T)-\bar x(T)\ra\big]
\\&&+\mathbb E^{\bar u} \bigg[ \la \bar k(T), \phi^u(T)-\bar \phi(T)-\bar\phi_x(T)(x^u(T)-\bar x(T))-\bar\phi_x(T)(\mathbb E[x^u(T)]
-\mathbb E[\bar x(T)])\ra\bigg]\nonumber
\\&&+ \mathbb E\bigg[\int_0^T\bigg(\langle \bar p(t), \sigma_2^u(t)(h^u(t)-\bar h(t))\rangle +\langle \bar k(t), z_2^u(t)(h^u(t)-\bar h(t))\rangle \bigg)dt\bigg]\nonumber
\\&&+\mathbb E^{\bar u} \bigg[\int_0^T \la { \bar{\cal H}}_x (t)+\frac{1}{\bar\rho(t)}\mathbb E^{\bar u}[{\cal H}_{x'} (t)], x^u (t) - \bar
x (t)\ra dt\bigg]
\nonumber
\\&&+\mathbb E^{\bar u} \bigg[\int_0^T \la { \bar{\cal H}}_y (t)+\frac{1}{\bar\rho(t)}\mathbb E^{\bar u}[{\cal H}_{y'} (t)], y^u (t) - \bar
y (t)\ra dt\bigg]
\nonumber
\\&&+\mathbb E^{\bar u}
 \bigg[\int_0^T \la { \bar{\cal H}}_{z_1} (t)+\frac{1}{\bar\rho(t)}\mathbb E^{\bar u}[{\cal H}_{z_1'} (t)], z_1^u (t) - \bar
z_1 (t)\ra dt\bigg]
\nonumber
\\&&+\mathbb E^{\bar u} \bigg[\int_0^T \la { \bar{\cal H}}_{z_2} (t)+\frac{1}{\bar\rho(t)}\mathbb E^{\bar u}[{\cal H}_{z_2¡®} (t)], z_2^u (t) - \bar
z_2 (t)\ra dt\bigg]\nonumber
\\&&+{\mathbb E}
 \big [  \left <
y^u (0)
- \bar y (0), \bar \gamma_y (0)  \right
> \big ]
\bigg]
\end{eqnarray}

Applying It\^{o} formula to
$(\rho^u(t)-\bar \rho(t)) \bar r(t), $
yields that
\begin{eqnarray}
  \begin{split}
    \mathbb E [ (\rho^u(T)-\bar\rho(T))\bar \Phi(T)]= &-\mathbb E\bigg[\int_0^T(\rho^u(t)-\bar\rho(t))(\bar l(t)+\bar R_2(t)\bar h(t))dt\bigg]
    +\mathbb E\bigg[\int_0^T\bar R_2(t)(\rho^u(t)h^u(t)-\bar\rho(t)\bar  h(t))dt\bigg],
  \end{split}
\end{eqnarray}

which implies that

\begin{eqnarray}\label{eq:31}
  \begin{split}
    \mathbb E  [(\rho^u(T)-\bar\rho(T))\bar \Phi(T)]+\mathbb E\bigg[\int_0^T(\rho^u(t)
    -\bar\rho(t))\bar l(t)dt\bigg]=
    \mathbb E\bigg[\int_0^T\bar R_2(t)\rho^u(t)(h^u(t)-\bar  h(t))dt\bigg]
  \end{split}
\end{eqnarray}

Putting \eqref{eq:29} into \eqref{eq:27},
 it follows that
\begin{eqnarray}\label{eq:32}
  \begin{split}
   \mathbb E^{\bar u}
   \bigg[\int_0^T
    (l^u(t)-\bar l(t))dt\bigg]={\mathbb E} ^{\bar u}
\bigg[\int_0^T \bigg ( &{ \cal H}(t,\Theta^u(t), \mathbb E[\Theta^u(t)], u(t),\mathbb E[u(t)],
 \bar\Lambda(t),\bar R_2(t))
  - {\bar{\cal H}} (t)
\\& - \big < \bar{\cal H}_{x} (t)+\frac{1}{\bar\rho(t)}\mathbb E^{\bar u}[{\cal H}_{x'} (t)],x^u (t) - \bar
x (t)\big>
\\&- \big <{\bar{\cal H}}_y (t)+\frac{1}{\bar\rho(t)}\mathbb E^{\bar u}[{\cal H}_{y'} (t)],
y^u (t) - \bar
y (t)\big>
\\&- \big <\bar{\cal H}_{z_1} (t)+\frac{1}{\bar\rho(t)}\mathbb E^{\bar u}[{\cal H}_{z_1'} (t)],
 z^u_1 (t) - \bar
z_1 (t)\big>
\\&- \big <\bar{\cal H}_{z_2} (t)+\frac{1}{\bar\rho(t)}\mathbb E^{\bar u}[{\cal H}_{z_2'} (t)],
 z^u_2 (t) - \bar
z_2 (t)\big>
\\&-
 \langle (\sigma_2^u(t)-\bar{\sigma}_2(t))
 (h^u(t)-\bar
 h(t)), \bar p(t)\rangle
 \\&-
 \langle (z_2^u(t)-\bar{z}_2(t))
 (h^u(t)-\bar
 h(t)), \bar k(t)\rangle \bigg)dt\bigg] \\
&  -\mathbb E^{\bar u} \big[ \la \bar\Phi_x(T)
+\frac{1}{\bar \rho(T)}\mathbb E^{\bar u} [\bar\Phi_x(T)] , x^u(T)-\bar x(T)\ra\big]
\\&-\mathbb E^{\bar u} \bigg[ \la \bar k(T), \phi^u(T)-\bar \phi(T)-\bar\phi_x(T)(x^u(T)-\bar x(T))-\bar\phi_x(T)(\mathbb E[x^u(T)]
-\mathbb E[\bar x(T)])\ra\bigg]
\\&-{\mathbb E}
 \big [  \left <
y^u (0)
- \bar y (0), \bar \gamma_y (0)  \right
> \big ]
  \end{split}
  \end{eqnarray}

Then by putting\eqref{eq:31} and
 \eqref{eq:32} into \eqref{eq:26}, we
  obtain \eqref{eq:21}. The proof is complete.
 \end{proof}

Since the control domain $U$ is convex,
 for any given admissible controls
 $u (\cdot) \in {\cal A}$,
the following perturbed control
 process $u^\epsilon (\cdot)$:
\begin{eqnarray*}
u^\epsilon (\cdot) := \bar u (\cdot) + \epsilon ( u (\cdot) - \bar u (\cdot) ) , \quad  0 \leq \epsilon \leq 1 ,
\end{eqnarray*}
is also in ${\cal A}$. We denote by $(\bar \Theta(\cdot),\bar\rho(\cdot))=(\bar x (\cdot),
\bar y (\cdot),
\bar z_1 (\cdot),\bar z_2(\cdot),\bar\rho(\cdot))$ and
$(\Theta^\epsilon (\cdot), \rho^\epsilon (\cdot))=(x^\epsilon (\cdot), y^\epsilon (\cdot), z^\epsilon_1 (\cdot), z^\eps_2(\cdot),\rho^\eps)$ the corresponding state processes
associated with $\bar u (\cdot)$ and $u^\epsilon (\cdot)$, respectively. Denote by $(\bar \Lambda(\cdot),\bar \Gamma(\cdot))=(\bar p(\cdot),\bar q_1(\cdot), \bar { q}_2(\cdot),\bar k(\cdot),\bar r(\cdot),\bar R_1(\cdot),\bar R_2(\cdot ) )$ the adjoint process
associated with the admissible pair $(\bar u (\cdot); \bar x (\cdot),
\bar y(\cdot), \bar z_1(\cdot),
 \bar z_2(\cdot))$.

\begin{lemma} \label{lem:3.5}
  Suppose  Assumptions \ref{ass:1.1}
   and \ref{ass:1.2} hold. Then
\begin{eqnarray*}
&&{\mathbb E} \bigg [ \sup_{t\in \cal T} | x^\epsilon (t) - \bar
x (t) |^4  \bigg ]
+{\mathbb E} \bigg [ \sup_{t\in \cal T} | y^\epsilon (t) - \bar
y (t) |^4  \bigg ]
+{\mathbb E} \bigg [
\bigg(\int_0^T| z^\epsilon_1 (t) - \bar
z _1(t) |^2 dt\bigg)^{2} \bigg ]
+{\mathbb E} \bigg [
\bigg(\int_0^T| z^\epsilon_2 (t) - \bar
z _2(t) |^2 dt\bigg)^{2} \bigg ]= O (\epsilon^4) \ .
\end{eqnarray*}
and
\begin{eqnarray*}
&&{\mathbb E} \bigg [ \sup_{t\in \cal T} | \rho^\epsilon (t) - \bar
\rho (t) |^2  \bigg ]
= O (\epsilon^2) \ .
\end{eqnarray*}
\end{lemma}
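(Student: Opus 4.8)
The plan is to treat this as a stability estimate for the controlled FBSDE system \eqref{eq:3.7}, where the perturbed control $u^\epsilon(\cdot)=\bar u(\cdot)+\epsilon(u(\cdot)-\bar u(\cdot))$ differs from $\bar u(\cdot)$ by an amount of order $\epsilon$ in the $M^2_{\mathscr F}$-sense; indeed $\|u^\epsilon-\bar u\|_{M^2_{\mathscr F}(0,T;\R^k)}=\epsilon\|u-\bar u\|_{M^2_{\mathscr F}(0,T;\R^k)}$, and more precisely ${\mathbb E}\big[(\int_0^T|u^\epsilon(t)-\bar u(t)|^2dt)^2\big]=\epsilon^4{\mathbb E}\big[(\int_0^T|u(t)-\bar u(t)|^2dt)^2\big]$, which is finite since both controls are admissible. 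The first two estimates are then essentially an immediate application of the second part of Lemma \ref{lem:3.3}: substituting $u=u^\epsilon$ and using the stability bound \eqref{eq:1.15} gives
\begin{eqnarray*}
{\mathbb E}\Big[\sup_{t\in\cal T}|x^\epsilon(t)-\bar x(t)|^4\Big]+{\mathbb E}\Big[\sup_{t\in\cal T}|y^\epsilon(t)-\bar y(t)|^4\Big]+{\mathbb E}\Big[\Big(\int_0^T|z_1^\epsilon(t)-\bar z_1(t)|^2dt\Big)^2\Big]+{\mathbb E}\Big[\Big(\int_0^T|z_2^\epsilon(t)-\bar z_2(t)|^2dt\Big)^2\Big]\le K\,{\mathbb E}\Big[\int_0^T|u^\epsilon(t)-\bar u(t)|^2dt\Big]^2=K\epsilon^4\,{\mathbb E}\Big[\int_0^T|u(t)-\bar u(t)|^2dt\Big]^2=O(\epsilon^4).
\end{eqnarray*}
Similarly, the third estimate for $\rho^\epsilon-\bar\rho$ follows directly from the second part of Lemma \ref{lem:3.4}, which gives ${\mathbb E}\big[\sup_{t\in\cal T}|\rho^\epsilon(t)-\bar\rho(t)|^2\big]\le K\big\{{\mathbb E}[\int_0^T|u^\epsilon(t)-\bar u(t)|^2dt]^2\big\}^{1/2}=K\big\{\epsilon^4\,{\mathbb E}[\int_0^T|u(t)-\bar u(t)|^2dt]^2\big\}^{1/2}=K\epsilon^2\,{\mathbb E}[\int_0^T|u(t)-\bar u(t)|^2dt]=O(\epsilon^2)$.

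Since the hard analytic work (well-posedness and the two stability inequalities) is already encapsulated in Lemmas \ref{lem:3.3} and \ref{lem:3.4}, the only genuine content here is the observation that $u^\epsilon$ is admissible and that $u^\epsilon-\bar u=\epsilon(u-\bar u)$, so all quantities scale by the stated powers of $\epsilon$; the bracketed constant $K\,{\mathbb E}[\int_0^T|u-\bar u|^2dt]^2$ is finite and independent of $\epsilon$, which is exactly the meaning of the $O(\cdot)$ notation. I would state this explicitly to make clear the estimates are uniform over $\epsilon\in[0,1]$ for fixed $u$. The only point requiring a word of care is that the coefficients of \eqref{eq:3.7} are evaluated along the respective optimal/perturbed trajectories and involve the mean-field terms ${\mathbb E}[x(t)],{\mathbb E}[u(t)]$, etc.; but since Lemma \ref{lem:3.3} is already formulated for \eqref{eq:4} — which is \eqref{eq:3.7} with the $\rho$-equation split off — and Lemma \ref{lem:3.4} handles $\rho^u$ separately, nothing new needs to be verified.

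The main (and only) obstacle, if one insists on a self-contained argument, would be re-deriving the stability estimates \eqref{eq:1.15} for the coupled forward-backward mean-field system: one would use It\^o's formula on $|x^\epsilon-\bar x|^4$ and $|y^\epsilon-\bar y|^4$, the Lipschitz continuity of $b,\sigma_1,\sigma_2,h,f,\phi$ from Assumption \ref{ass:1.1}, the Burkholder–Davis–Gundy and Gronwall inequalities, together with the mean-field bounds ${\mathbb E}|{\mathbb E}[x^\epsilon(t)]-{\mathbb E}[\bar x(t)]|^4\le{\mathbb E}|x^\epsilon(t)-\bar x(t)|^4$, and treat the backward component by the usual a priori BSDE estimates — but since the excerpt grants us Lemmas \ref{lem:3.3} and \ref{lem:3.4}, I would simply cite them and conclude. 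The proof is therefore short: instantiate $u:=u^\epsilon$ in those two lemmas and read off the powers of $\epsilon$.
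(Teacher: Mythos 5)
Your proof is correct and follows exactly the paper's route: the paper's own proof is just the observation that the claim follows directly from Lemmas \ref{lem:3.3} and \ref{lem:3.4} applied to $u^\epsilon$, using $u^\epsilon-\bar u=\epsilon(u-\bar u)$, which is precisely what you spell out (with the added benefit of making the scaling and uniformity in $\epsilon$ explicit).
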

\begin{proof}
  The proof can be obtained
  directly by Lemmas \ref{lem:3.3}
  and \ref{lem:3.4}.
\end{proof}

Now we are in the position to apply
  Lemma \ref{lem4} and Lemma \ref{lem:3.5} to derive the variational formula for the
  cost functional
$J(u(\cdot))$ in terms of the Hamiltonian ${\cal H}$.

\begin{theorem}\label{them:3.1}
 Suppose that Assumptions \ref{ass:1.1} and
 \ref{ass:1.2} holds.
Then for any admissible control
 $u (\cdot) \in {\cal A}$,
 a variation formula  for the
 cost functional $J (u (\cdot))$ at $\bar u (\cdot)$  is given by
\begin{eqnarray}\label{eq:4.4}
&& \frac{d}{d\epsilon} J ( \bar u (\cdot) + \epsilon ( u (\cdot) - \bar u (\cdot) ) ) |_{\epsilon=0} \nonumber \\
&& := \lim_{\epsilon \rightarrow 0^+}
\frac{ J ( \bar u (\cdot) + \epsilon ( u (\cdot) - \bar u (\cdot) ) )
- J( \bar u (\cdot) ) }{\epsilon} \nonumber \\
&& = {\mathbb E} \bigg
 [ \int_0^T \left < \bar \rho(t){\bar{\cal H}}_u (t)+
\mathbb E^{\bar u}[{\bar{\cal H}}_u (t)],
u (t) -
 \bar u (t) \right > d t \bigg ] .
\end{eqnarray}
\end{theorem}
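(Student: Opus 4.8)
The plan is to apply the exact difference formula \eqref{eq:21} of Lemma \ref{lem4} with $u(\cdot)$ replaced by the perturbed control $u^\epsilon(\cdot)=\bar u(\cdot)+\epsilon(u(\cdot)-\bar u(\cdot))$, then divide by $\epsilon$ and let $\epsilon\to 0^+$, using the order estimates of Lemma \ref{lem:3.5} to discard every term that is $o(\epsilon)$. Throughout, $x^\epsilon-\bar x$, $y^\epsilon-\bar y$, $z_i^\epsilon-\bar z_i$ are $O(\epsilon)$ in the norms of Lemma \ref{lem:3.5}, $\rho^\epsilon-\bar\rho$ is $O(\epsilon)$ in $S^2_{\mathscr F}(0,T;\mathbb R)$, and $u^\epsilon-\bar u=\epsilon(u-\bar u)$ exactly.

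First I would show that the ``product'' terms in \eqref{eq:21} contribute nothing in the limit. Each of $\langle(\sigma_2^\epsilon-\bar\sigma_2)(h^\epsilon-\bar h),\bar p\rangle$, $\langle(z_2^\epsilon-\bar z_2)(h^\epsilon-\bar h),\bar k\rangle$, $\bar R_2(\rho^\epsilon-\bar\rho)(h^\epsilon-\bar h)$, $(l^\epsilon-\bar l)(\rho^\epsilon-\bar\rho)$ and $(\rho^\epsilon(T)-\bar\rho(T))(\Phi^\epsilon(T)-\bar\Phi(T))$ is a product of two factors, each $O(\epsilon)$ in a suitable $L^p$-space (using the mean value theorem and the bounded-derivative / linear-growth bounds of Assumptions \ref{ass:1.1}--\ref{ass:1.2} on $\sigma_2,h,l,\Phi$), so Hölder's inequality and Lemma \ref{lem:3.5} make each of them $O(\epsilon^2)$, hence negligible after division by $\epsilon$. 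The same mechanism handles the terminal/initial first-order Taylor remainders attached to $\Phi$, $\phi$, $\gamma$: for instance $\Phi^\epsilon(T)-\bar\Phi(T)-\langle\bar\Phi_x(T)+\tfrac1{\bar\rho(T)}\mathbb E^{\bar u}[\bar\Phi_{x'}(T)],x^\epsilon(T)-\bar x(T)\rangle$ is rewritten, via the integral form of Taylor's formula in the pair $(x,\mathbb E[x])$ and the identity $\mathbb E[G\,\mathbb E[\delta]]=\mathbb E[\mathbb E[G]\,\delta]$ that absorbs the mean-field slot, as $\int_0^1\langle\Phi_x(\bar x(T)+\lambda\delta,\cdot)-\Phi_x(\bar x(T),\cdot),\delta\rangle\,d\lambda$ with $\delta=x^\epsilon(T)-\bar x(T)=O(\epsilon)$ in $L^4$; since $\Phi_x$ is bounded and continuous and $x^\epsilon(T)\to\bar x(T)$ in $L^4$, Hölder's inequality plus the dominated convergence theorem show this is $o(\epsilon)$. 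The $\phi$-remainder (which also carries the $L^4$ factor $\bar k(T)$) and the $\gamma$-remainder are disposed of identically.

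Next I would expand the Hamiltonian integrand. Applying Taylor's formula with integral remainder to $\mathcal H(t,\Theta^\epsilon(t),u^\epsilon(t),\mathbb E[\Theta^\epsilon(t)],\mathbb E[u^\epsilon(t)],\bar\Lambda(t),\bar R_2(t))-\bar{\mathcal H}(t)$ in the variables $(x,y,z_1,z_2,u)$ and their mean-field counterparts, the result splits into: (a) the first-order terms in $x^\epsilon-\bar x,\dots,z_2^\epsilon-\bar z_2$ and in $\mathbb E[x^\epsilon]-\mathbb E[\bar x],\dots$, which — after rewriting the mean-field slots by the $\mathbb E^{\bar u}[G\,\mathbb E[\delta]]$-type identities and the change of measure $d\mathbb P^{\bar u}=\bar\rho(T)\,d\mathbb P$ — cancel exactly the subtracted terms $\langle\bar{\mathcal H}_x(t)+\tfrac1{\bar\rho(t)}\mathbb E^{\bar u}[\mathcal H_{x'}(t)],x^\epsilon-\bar x\rangle,\dots$ present in \eqref{eq:21}, this being precisely how the adjoint system \eqref{eq:18} was designed; (b) the associated Taylor remainders, each a bounded-and-vanishing difference of a derivative of $\mathcal H$ (controlled by the growth bounds of Assumptions \ref{ass:1.1}--\ref{ass:1.2}) times an $O(\epsilon)$ increment, integrated over $[0,T]$, hence $o(\epsilon)$ after division by $\epsilon$ by the argument of the previous paragraph; and (c) the first-order terms in $u^\epsilon-\bar u=\epsilon(u-\bar u)$ and $\mathbb E[u^\epsilon]-\mathbb E[\bar u]=\epsilon\,\mathbb E[u-\bar u]$, which survive. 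For the surviving part one is left with $\tfrac1\epsilon\,\mathbb E^{\bar u}\big[\int_0^T(\langle\bar{\mathcal H}_u(t),u^\epsilon(t)-\bar u(t)\rangle+\langle\bar{\mathcal H}_{u'}(t),\mathbb E[u^\epsilon(t)]-\mathbb E[\bar u(t)]\rangle)\,dt\big]$; using $\mathbb E^{\bar u}[X]=\mathbb E[\bar\rho(t)X]$ for $\mathscr F_t$-measurable $X$ (martingale property of $\bar\rho$) together with the fact that $\mathbb E[u^\epsilon(t)-\bar u(t)]$ is deterministic, this equals $\mathbb E\big[\int_0^T\langle\bar\rho(t)\bar{\mathcal H}_u(t)+\mathbb E^{\bar u}[\bar{\mathcal H}_u(t)],u(t)-\bar u(t)\rangle\,dt\big]$, which is \eqref{eq:4.4}.

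The main obstacle I anticipate is step (a): keeping precise track of the mean-field arguments together with the Girsanov change of measure so that the first-order $(x',y',z_1',z_2')$-contributions in the Hamiltonian expansion line up \emph{exactly} with the $\tfrac1{\bar\rho}\mathbb E^{\bar u}[\cdot]$ terms built into the adjoint equation \eqref{eq:18} and appearing in \eqref{eq:21}, and doing this with the limited integrability of $z_1,z_2$ (only $M^2$-type), so that every Hölder estimate invoked to justify passing to the limit stays within the exponents furnished by Lemmas \ref{lem:3.3}--\ref{lem:3.4}. Once the cancellation bookkeeping is organized, the passage to the limit is a routine application of dominated convergence with the uniform-integrability bounds supplied by those lemmas and Lemma \ref{lem:3.5}.
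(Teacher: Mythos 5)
Your proposal follows essentially the same route as the paper: apply the exact difference formula of Lemma \ref{lem4} at the perturbed control $u^\epsilon$, isolate the first-order $u$-direction term (which, after the change of measure $d\mathbb P^{\bar u}=\bar\rho\,d\mathbb P$ and the fact that $\mathbb E[u^\epsilon-\bar u]$ is deterministic, equals $\epsilon$ times the claimed variation), and show the remaining terms are $o(\epsilon)$ by Taylor expansion, the order estimates of Lemma \ref{lem:3.5}, and dominated convergence. Your write-up actually supplies more of the bookkeeping (the quadratic product terms, the terminal/initial Taylor remainders, and the cancellation of the mean-field first-order terms against the $\tfrac{1}{\bar\rho}\mathbb E^{\bar u}[\cdot]$ pieces built into the adjoint equation) than the paper's own terse argument, but the underlying decomposition is identical.
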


\begin{proof}
To simplify our notations, denote by
\begin{eqnarray}
\beta^\epsilon &:=& {\mathbb E} ^{\bar u}
\bigg[\int_0^T \bigg \{ { \cal H}(t,\Theta^{u^\eps}(t), \mathbb E[\Theta^{u^\eps}(t)], u(t),\mathbb E[u(t)],
 \bar\Lambda(t),\bar R_2(t)) - {\bar{\cal H}} (t)\nonumber
   \\&&\quad\quad\quad- \big < {\bar{\cal H}_x} (t)+\frac{1}{\bar\rho(t)}\mathbb E^{\bar u}[{\cal H}_{x'} (t)],x^{u^\eps} (t) - \bar
x (t)
\big>\nonumber
\\&&\quad\quad\quad- \big <{ \bar{\cal H}}_y (t)+\frac{1}{\bar\rho(t)}\mathbb E^{\bar u}[{\cal H}_{y'} (t)],
y^{u^\eps} (t) - \bar
y (t)\big>
\\&&\quad\quad\quad- \big <{ \bar{\cal H}}_{z_1} (t)+\frac{1}{\bar\rho(t)}\mathbb E^{\bar u}[{\cal H}_{z_1'} (t)],
z^{u^\eps}_1 (t) - \bar
z_1 (t)\big>\nonumber
\\&&\quad\quad\quad- \big <{ \bar{\cal H}}_{z_2} (t)+\frac{1}{\bar\rho(t)}\mathbb E^{\bar u}[{\cal H}_{z_2¡®} (t)],
z^{u^\eps}_2 (t) - \bar
z_2 (t)\big>
\nonumber
\\&&\quad\quad\quad- \big <{ \bar{\cal H}}_{u} (t)+\frac{1}{\bar\rho(t)}\mathbb E^{\bar u}[{\cal H}_{u} (t)],u (t) - \bar u(t)\big>
\nonumber
\\&&\quad\quad\quad-
 \langle (\sigma_2^{u^\eps}(t)-\bar{\sigma}_2(t))
 (h^{u^\eps}(t)-\bar
 h(t)), \bar p(t)\rangle\nonumber
 \\&&\quad\quad\quad-
 \langle (z_2^{u^\eps}(t)-\bar{z}_2(t))
 (h^{u^\eps}(t)-\bar
 h(t)), \bar k(t)\rangle\bigg\} dt\bigg]
\nonumber \\
&&\quad\quad\quad + {\mathbb E}^{\bar u} \big [ \Phi^{u^\eps}(T)
 - \bar \Phi (T) -  \la
x^{u^\eps} (T) - \bar x (T), \bar\Phi_x(T)+\frac{1}{\bar \rho(T)}\mathbb E^{\bar u} \left[\bar\Phi_x(T)\right]
\ra \big ]
\nonumber \\
&&\quad\quad\quad - {\mathbb E}^{\bar u}
\big [ \la \phi^{u^\eps} (T) - \bar \phi (T), \bar k(T)\ra - \langle
\bar \phi_x^*(T)\bar k(T)+\frac{1}{\bar \rho(T)}\mathbb E^{\bar u} \left[\bar\phi_x^*(T)\bar k(T)\right],
x^{u^\eps} (T) - \bar x (T) \rangle \big ]
\nonumber \\
&&\quad\quad\quad + {\mathbb E}
 \big [ \gamma^{u^\eps}(0) - \bar \gamma (0)
 - \la
y^{u^\eps} (0)
- \bar y (0), \bar \gamma_y (0)  \ra\big ]
\nonumber
\\&&\quad\quad\quad+\mathbb E\bigg[\int_0^T \bar R_2(t)(\rho^{u^\eps}(t)-\bar\rho(t))(h^{u^\eps}(t)-\bar
 h(t))dt\bigg] \nonumber
\\&&\quad\quad\quad+\mathbb E\bigg[\int_0^T(l^{u^\eps}(t)-\bar l(t))( \rho^{u^\eps}(t)-\bar
\rho(t))dt\bigg]\nonumber
\\&&\quad\quad\quad+\mathbb E \bigg[ ( \rho^{u^\eps} (T) - \bar \rho(T)) (\Phi^{u^\eps} (T) -
\bar \Phi (T))\bigg].
\end{eqnarray}
In view of  Lemma \ref{lem4}, we get
\begin{eqnarray}\label{eq:4.13}
J ( u^\epsilon (\cdot) ) - J ( \bar u (\cdot) )
= \beta^\epsilon + \epsilon {\mathbb E} \bigg
 [ \int_0^T \left < \bar \rho(t){\bar{\cal H}}_u (t)+
\mathbb E^{\bar u}[{\bar{\cal H}}_u (t)],
u (t) -
 \bar u (t) \right > d t  \bigg ] .
\end{eqnarray}
By Assumptions \ref{ass:1.1} and
\ref{ass:1.2}, combining the Taylor Expansions, Lemma \ref{lem:3.5}, and the dominated
convergence theorem, we obtain that
\begin{eqnarray}\label{eq:4.121}
\beta^\epsilon = o(\epsilon) .
\end{eqnarray}
Putting  \eqref{eq:4.121} into \eqref{eq:4.13} gives
\begin{eqnarray*}
\lim_{\epsilon \rightarrow 0^+} \frac{J (u^\epsilon (\cdot) ) - J (\bar u (\cdot))}{\epsilon}
=  {\mathbb E} \bigg
 [ \int_0^T \left < \bar \rho(t){\bar{\cal H}}_u (t)+
\mathbb E^{\bar u}[{\bar{\cal H}}_u (t)],
u (t) -
 \bar u (t) \right > d t \bigg ] .
\end{eqnarray*}
This completes the proof.
\end{proof}

\section{Main Results}
 This section is devoted to
   establishing the necessary condition and sufficient maximum principles for Problem \ref{pro:1.1} or \ref{pro:1.2}.
We first prove the necessary condition of optimality for the existence of an optimal control.

\begin{theorem}[{\bf Necessary Stochastic Maximum principle}]
 Suppose that  Assumptions \ref{ass:1.1}
  and \ref{ass:1.2} holds and $( \bar u (\cdot); \bar x (\cdot), \bar y(\cdot), \bar z_1(\cdot),
  \\\bar z_2(\cdot),\bar\rho(\cdot) )$ is
an optimal pair of  Problem \ref{pro:1.2}.
 Then
\begin{eqnarray}\label{eq:4.15}
\left <  \mathbb E[\bar \rho(t){\bar{\cal H}}_u (t)|\mathscr F^Y_t]+
\mathbb E^{\bar u}[{\bar{\cal H}}_u (t)],
 u- \bar u (t) \right > \geq 0 , \quad \forall u \in U ,\ a.e.\ a.s..
\end{eqnarray}
\end{theorem}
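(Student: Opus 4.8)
The plan is to combine the optimality of $\bar u(\cdot)$ with the variational formula of Theorem~\ref{them:3.1}, and then to localise the resulting integrated inequality by a standard measurable‑selection / Lebesgue‑point argument. First I would observe that, since $(\bar u(\cdot);\bar x(\cdot),\bar y(\cdot),\bar z_1(\cdot),\bar z_2(\cdot),\bar\rho(\cdot))$ is optimal for Problem~\ref{pro:1.2} and $U$ is convex, for every $u(\cdot)\in\mathcal A$ the perturbed control $u^\epsilon(\cdot)=\bar u(\cdot)+\epsilon(u(\cdot)-\bar u(\cdot))$ lies in $\mathcal A$ and $J(u^\epsilon(\cdot))\ge J(\bar u(\cdot))$ for all $\epsilon\in[0,1]$. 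Dividing by $\epsilon>0$, letting $\epsilon\to0^+$, and invoking \eqref{eq:4.4} gives
\begin{equation*}
\mathbb E\bigg[\int_0^T\Big\langle \bar\rho(t)\bar{\mathcal H}_u(t)+\mathbb E^{\bar u}[\bar{\mathcal H}_u(t)],\,u(t)-\bar u(t)\Big\rangle\, dt\bigg]\ge 0\qquad\text{for every }u(\cdot)\in\mathcal A .
\end{equation*}

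Next I would localise this inequality. Fix $v\in U$, a time $t_0\in[0,T)$, a small $h>0$, and a set $A\in\mathscr F^Y_{t_0}$, and apply the displayed inequality to the control $u(s):=\bar u(s)+(v-\bar u(s))\mathbf{1}_A\mathbf{1}_{[t_0,t_0+h]}(s)$, which is $\mathscr F^Y_s$‑adapted, $U$‑valued by convexity, and satisfies the integrability requirement of the definition of $\mathcal A$ because $\bar u(\cdot)$ does. This yields
\begin{equation*}
\mathbb E\bigg[\mathbf{1}_A\int_{t_0}^{t_0+h}\Big\langle \bar\rho(s)\bar{\mathcal H}_u(s)+\mathbb E^{\bar u}[\bar{\mathcal H}_u(s)],\,v-\bar u(s)\Big\rangle\, ds\bigg]\ge 0 .
\end{equation*}
By Lemmas~\ref{lem:3.3}--\ref{lem:3.4} together with the growth conditions in Assumptions~\ref{ass:1.1}--\ref{ass:1.2}, the integrand belongs to $L^1([0,T]\times\Omega)$, so dividing by $h$ and sending $h\to0^+$ along Lebesgue points of the (integrable) map $t\mapsto\mathbb E\big[\mathbf{1}_A\langle \bar\rho(t)\bar{\mathcal H}_u(t)+\mathbb E^{\bar u}[\bar{\mathcal H}_u(t)],\,v-\bar u(t)\rangle\big]$ gives, for a.e.\ $t$ and every $A\in\mathscr F^Y_t$,
\begin{equation*}
\mathbb E\Big[\mathbf{1}_A\big\langle \bar\rho(t)\bar{\mathcal H}_u(t)+\mathbb E^{\bar u}[\bar{\mathcal H}_u(t)],\,v-\bar u(t)\big\rangle\Big]\ge 0 .
\end{equation*}
Since $A\in\mathscr F^Y_t$ is arbitrary, the definition of conditional expectation yields $\mathbb E\big[\langle \bar\rho(t)\bar{\mathcal H}_u(t)+\mathbb E^{\bar u}[\bar{\mathcal H}_u(t)],\,v-\bar u(t)\rangle\,\big|\,\mathscr F^Y_t\big]\ge0$ a.s.; and because $\bar u(t)$ is $\mathscr F^Y_t$‑measurable and $\mathbb E^{\bar u}[\bar{\mathcal H}_u(t)]$ is deterministic, pulling these factors out of the conditioning produces exactly \eqref{eq:4.15} for the fixed $v$.

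Finally, to upgrade ``for the fixed $v$'' to ``$\forall u\in U$'' I would fix a countable dense subset $\{v_j\}_{j\ge1}\subset U$, run the previous step for each $v_j$ to obtain a single $(t,\omega)$‑null set outside of which \eqref{eq:4.15} holds simultaneously for all $v_j$, and then extend to every $u\in U$ by continuity of $v\mapsto\langle\,\cdot\,,v\rangle$. The one genuinely delicate point — and the main obstacle — is the $h\to0^+$ passage: one must apply the Lebesgue differentiation theorem simultaneously for $A$ ranging over a countable algebra generating $\mathscr F^Y_t$ and $v$ over $\{v_j\}$, which is legitimate precisely because of the $L^1$ bound supplied by the a priori estimates already established; the remaining steps are routine bookkeeping.
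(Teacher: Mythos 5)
Your proposal is correct and follows essentially the same route as the paper: optimality of $\bar u(\cdot)$ combined with the variational formula of Theorem \ref{them:3.1}, then the $\mathscr F^Y_t$-adaptedness of admissible controls and properties of conditional expectation to arrive at \eqref{eq:4.15}. The only difference is that you spell out the localization step (spike perturbations over sets in $\mathscr F^Y_{t_0}$, Lebesgue points, a countable dense subset of $U$) which the paper asserts without detail; that added care is sound.
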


\begin{proof}
Because  all admissible controls
are
$\{\mathscr F^Y_t\}_{t\in \cal T}$-adapted
processes, using  the property of conditional
expectation, Theorem \ref{them:3.1} and the optimality of $\bar u(\cdot)$,  we
obtain  that
\begin{eqnarray*}
&& {\mathbb E} \bigg [ \int_0^T
 \langle \mathbb E[\bar \rho(t){\bar{\cal H}}_u (t)|\mathscr F^Y_t]+
\mathbb E^{\bar u}[{\bar{\cal H}}_u (t)] ,
u (t) - \bar u (t) \rangle  d t \bigg ] \\
&&=
{\mathbb E} \bigg [ \int_0^T
 \langle \mathbb E[\bar \rho(t){\bar{\cal H}}_u (t)+
\mathbb E^{\bar u}[{\bar{\cal H}}_u (t)]|\mathscr F^Y_t] ,
u (t) - \bar u (t) \rangle  d t \bigg ]
\\&&={\mathbb E} \bigg
 [ \int_0^T \left < \bar \rho(t){\bar{\cal H}}_u (t)+
\mathbb E^{\bar u}[{\bar{\cal H}}_u (t)],
u (t) -
 \bar u (t) \right > d t \bigg ]  \\
&& = \lim_{\epsilon \rightarrow 0^+} \frac{J( \bar u (\cdot) + \epsilon (
u (\cdot) - \bar u (\cdot) ) ) - J ( \bar u (\cdot) )}{\epsilon} \geq 0 ,
\end{eqnarray*}
which proves
\eqref{eq:4.15} is satisfied.
  The proof is complete.

\end{proof}

Next we prove the sufficient condition of optimality for the existence of an optimal control of
Problem \ref{pro:1.2} in the case when
the observation process does not
contains the control process and the
state process.
Assume that
$$h(t,x,u)=h(t)$$ is an
$\mathscr F^Y_t-$ adapted bounded
process. Introduce a new probability measure $\mathbb Q$ on $(\Omega, \mathscr F)$ by
\begin{eqnarray}
  d\mathbb Q=\rho(1)d\mathbb P,
\end{eqnarray}
where
\begin{eqnarray} \label{eq:43}
  \left\{
\begin{aligned}
  d \rho(t)=&  \rho(t) h(s)dY(s)\\
  \rho(0)=&1.
\end{aligned}
\right.
\end{eqnarray}

\begin{theorem}{\bf [Sufficient Maximum Principle] } \label{thm:4.1}
 Suppose that Assumptions \ref{ass:1.1}
  and \ref{ass:1.2}  hold and  $(\bar u (\cdot);
  \bar \Theta (\cdot))=(\bar u (\cdot);
  \bar x (\cdot),
  \bar y(\cdot), \bar z_1(\cdot)
  ,\bar z_2 (\cdot))$ is an admissible
   pair with $\phi(x)=\phi x,$
  where $\phi$ is $\mathscr F_T-$measurable bounded  random variable. Assume that
\begin{enumerate}
\item[(i)]  $\Phi$ and $\gamma$ is convex in $x$ and $y,$ respectively,
\item[(ii)] the Hamiltonian ${\cal H}$ is convex in $(x, y, z_1, z_2,  u)$,
\item[(iii)]
\begin{eqnarray*}\label{eq:5.119}
&& \mathbb E^Q\bigg[{\cal H} ( t,\bar\Theta(t),\bar u(t), \mathbb E[\bar \Theta(t)],\mathbb E[\bar u(t)),
 \bar\Lambda(t), \bar R_2(t)) |\mathscr F^Y_t\bigg]\nonumber \\
&& = \min_{(u, u') \in U\times U }
 \mathbb E^Q\bigg[{\cal H} ( t,\bar\Theta(t), u,\mathbb E[\bar \Theta(t)],u',
 \bar\Lambda(t), \bar R_2(t)
 |\mathscr F^Y_t\bigg], \quad \mbox {a.e.\ a.s.} ,
\end{eqnarray*}
\end{enumerate}
where $( {\bar\Lambda}(\cdot),
 {\bar\Gamma}(\cdot))=( \bar k(\cdot),
 \bar p(\cdot), \bar q_1(\cdot),
  \bar q_2(\cdot), \bar r(\cdot),
   \bar R_1(\cdot), \bar R_2(\cdot) )$ is the
adjoint process associated with $( \bar u(\cdot);  \bar\Theta(\cdot)).$
Then $(\bar u (\cdot),  \bar \Theta (\cdot))$ is an optimal pair of Problem \ref{pro:1.2}.
\end{theorem}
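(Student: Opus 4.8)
The plan is to show directly that $J(u(\cdot)) - J(\bar u(\cdot)) \ge 0$ for every admissible $u(\cdot)$, using the variation-of-cost identity from Lemma~\ref{lem4}. Since we are now in the special case $h(t,x,u) = h(t)$, an $\mathscr F^Y_t$-adapted bounded process, the Girsanov density $\rho^u(\cdot)$ no longer depends on $u(\cdot)$; indeed $\rho^u(\cdot) = \rho(\cdot)$ for all admissible controls, where $\rho(\cdot)$ solves \eqref{eq:43}, so $\mathbb P^u = \mathbb Q$ for all $u(\cdot)$. This collapses a large portion of the right-hand side of \eqref{eq:21}: every term containing a factor $\rho^u(t) - \bar\rho(t)$ or $h^u(t) - \bar h(t)$ vanishes identically. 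Likewise, since $\phi(x) = \phi x$ is affine, the term $\mathbb E^{\bar u}[\langle \bar k(T), \phi^u(T) - \bar\phi(T) - \bar\phi_x(T)(x^u(T) - \bar x(T)) - \ldots\rangle]$ vanishes. What remains in \eqref{eq:21} is precisely
\begin{eqnarray*}
J(u(\cdot)) - J(\bar u(\cdot)) &=& \mathbb E^Q\bigg[\int_0^T \Big\{ {\cal H}(t,\Theta^u(t),u(t),\mathbb E[\Theta^u(t)],\mathbb E[u(t)],\bar\Lambda(t),\bar R_2(t)) - \bar{\cal H}(t) \\
&&\quad - \big\langle \bar{\cal H}_x(t) + \tfrac{1}{\bar\rho(t)}\mathbb E^Q[{\cal H}_{x'}(t)], x^u(t) - \bar x(t)\big\rangle - \cdots \Big\} dt\bigg] \\
&&\quad + \mathbb E^Q\big[\Phi^u(T) - \bar\Phi(T) - \langle x^u(T) - \bar x(T), \bar\Phi_x(T) + \tfrac{1}{\bar\rho(T)}\mathbb E^Q[\bar\Phi_{x'}(T)]\rangle\big] \\
&&\quad + \mathbb E\big[\gamma^u(0) - \bar\gamma(0) - \langle y^u(0) - \bar y(0), \bar\gamma_y(0)\rangle\big],
\end{eqnarray*}
where the omitted terms are the analogous first-order remainders in $y, z_1, z_2$.

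The second and third expectations are nonnegative by hypothesis~(i): $\Phi$ convex in $x$ gives $\Phi^u(T) - \bar\Phi(T) \ge \langle \bar\Phi_x(T), x^u(T) - \bar x(T)\rangle$, and one must still absorb the mean-field term $\tfrac{1}{\bar\rho(T)}\mathbb E^Q[\bar\Phi_{x'}(T)]$; here I would use the identity $\mathbb E^Q[\tfrac{1}{\bar\rho(T)}\mathbb E^Q[\bar\Phi_{x'}(T)]\cdot(x^u(T) - \bar x(T))] = \mathbb E^Q[\bar\Phi_{x'}(T)]\cdot\mathbb E^Q[\tfrac{x^u(T)-\bar x(T)}{\bar\rho(T)}]$, but in fact since $\phi(x)=\phi x$ is affine the terminal condition forces $\Phi$ to genuinely be the only terminal cost and $\mathbb E^Q$-convexity suffices after taking $Q$-expectations; convexity of $\gamma$ similarly handles the $\gamma$-term. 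For the integral term, I would invoke convexity of ${\cal H}$ in $(x,y,z_1,z_2,u)$ (hypothesis~(ii)) to bound the bracket from below by $\langle \bar{\cal H}_u(t), u(t) - \bar u(t)\rangle$ plus the mean-field cross-terms $-\tfrac{1}{\bar\rho(t)}\langle \mathbb E^Q[{\cal H}_{x'}(t)], x^u(t)-\bar x(t)\rangle - \cdots$. Taking $Q$-expectation and using the $\mathbb E^Q[\tfrac{1}{\bar\rho(t)}\mathbb E^Q[\cdot]\,(\cdot)] = \mathbb E^Q[\cdot]\mathbb E^Q[\tfrac{\cdot}{\bar\rho(t)}]$ trick, these mean-field cross-terms should cancel against the primed partial derivatives hidden inside the full Hamiltonian gradient, leaving
\begin{eqnarray*}
J(u(\cdot)) - J(\bar u(\cdot)) \ge \mathbb E^Q\bigg[\int_0^T \langle \bar{\cal H}_u(t), u(t) - \bar u(t)\rangle\, dt\bigg] = \mathbb E\bigg[\int_0^T \bar\rho(t)\langle \bar{\cal H}_u(t), u(t)-\bar u(t)\rangle\, dt\bigg].
\end{eqnarray*}
Finally, conditioning on $\mathscr F^Y_t$ and using that $u(\cdot), \bar u(\cdot)$ are $\mathscr F^Y_t$-adapted together with the minimum condition~(iii) — which by differentiating the conditional-expectation minimization in $(u,u')$ at the optimum yields $\langle \mathbb E^Q[\bar{\cal H}_u(t)\mid\mathscr F^Y_t] + \mathbb E^Q[\bar{\cal H}_{u'}(t)\mid\mathscr F^Y_t], u - \bar u(t)\rangle \ge 0$ for all $u \in U$ — gives nonnegativity of the integrand $Q$-a.s., hence $J(u(\cdot)) \ge J(\bar u(\cdot))$.

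The main obstacle I anticipate is the careful bookkeeping of the mean-field (primed-variable) terms: one has to match the $\tfrac{1}{\bar\rho(t)}\mathbb E^Q[{\cal H}_{x'}(t)]$-type quantities appearing in Lemma~\ref{lem4} against the derivatives $\bar{\cal H}_{x'}, \bar{\cal H}_{y'}, \ldots$ that arise when expanding the convexity inequality for ${\cal H}(t,\Theta,u,\mathbb E[\Theta],\mathbb E[u],\cdots)$ in \emph{all} ten arguments, and verify they cancel using $\mathbb E^Q[\mathbb E^Q[\xi]\,\eta] = \mathbb E^Q[\xi]\,\mathbb E^Q[\eta]$ (after the $1/\bar\rho$ change of measure back to $\mathbb P$, this is just $\mathbb E[\mathbb E^Q[\xi]\cdot\bar\rho\,\eta]$, needing $\mathbb E^Q[\xi]$ to be deterministic, which it is). A secondary subtlety is relating condition~(iii), stated as a joint minimization over $(u,u') \in U\times U$ of a conditional expectation, to the first-order variational inequality in the single real control direction $u(\cdot) - \bar u(\cdot)$; since $u^\eps = \bar u + \eps(u - \bar u)$ perturbs both the $u$-slot and (through $\mathbb E[u^\eps] = \mathbb E[\bar u] + \eps\mathbb E[u - \bar u]$) the $u'$-slot simultaneously, the correct first-order consequence of~(iii) is exactly the sum $\bar{\cal H}_u + \bar{\cal H}_{u'}$ paired against $u - \bar u$, which is what the variation formula in Theorem~\ref{them:3.1} already delivers.
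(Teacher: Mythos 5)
Your proposal is correct and follows essentially the same route as the paper: specialize the identity of Lemma \ref{lem4} to the case at hand (every term containing $h^u-\bar h$, $\rho^u-\bar\rho$, or the nonlinearity of $\phi$ drops out since $h$ is control- and state-free and $\phi$ is linear), apply convexity of ${\cal H}$, $\Phi$, $\gamma$, cancel the primed mean-field corrections against the $\tfrac{1}{\bar\rho}\mathbb E^{\bar u}[\cdot]$ terms, and finish with the first-order consequence of condition (iii). The one small fix: when extracting that first-order inequality from the joint minimization over $(u,u')\in U\times U$, pair ${\cal H}_{u'}$ against the deterministic direction $\mathbb E[u(t)]-\mathbb E[\bar u(t)]$, i.e.\ take $(u,u')=(u(t),\mathbb E[u(t)])$ as in \eqref{eq:5.5}--\eqref{eq:45} of the paper, rather than the diagonal direction $u-\bar u(t)$ you wrote, since it is the term $\langle \bar{\cal H}_{u'}(t),\mathbb E[u(t)]-\mathbb E[\bar u(t)]\rangle$ left over from the convexity inequality that must be absorbed.
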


\begin{proof}
Let $({u}(\cdot);x^u(\cdot),
 y^u(\cdot),
 z^u_1(\cdot),  z^u_2(\cdot),
\rho^u(\cdot))$
 be an arbitrary admissible pair. From Lemma \ref{lem4},
the difference $J ( u (\cdot) ) - J ( \bar u (\cdot) )$ can be represented as follows
\begin{eqnarray}\label{eq:40}
&&J (u (\cdot)) - J (\bar u(\cdot))\nonumber
\\ &=& {\mathbb E}^{Q}
\bigg[\int_0^T \bigg \{ { \cal H}(t,\Theta^u(t), \mathbb E[\Theta^u(t)], u(t),\mathbb E[u(t)],
 \bar\Lambda(t),\bar R_2(t)) - {\bar{\cal H}} (t)\nonumber
   \\&&\quad\quad\quad- \big < {\bar{\cal H}_x} (t)+\frac{1}{\bar\rho(t)}\mathbb E^{\bar u}[{\cal H}_{x'} (t)],x^u (t) - \bar
x (t)
\big>\nonumber
- \big <{ \bar{\cal H}}_y (t)+\frac{1}{\bar\rho(t)}\mathbb E^{\bar u}[{\cal H}_{y'} (t)],
y^u (t) - \bar
y (t)\big>
\\&&\quad\quad\quad- \big <{ \bar{\cal H}}_{z_1} (t)+\frac{1}{\bar\rho(t)}\mathbb E^{\bar u}[{\cal H}_{z_1'} (t)],
z^u_1 (t) - \bar
z_1 (t)\big>- \big <{ \bar{\cal H}}_{z_2} (t)+\frac{1}{\bar\rho(t)}\mathbb E^{\bar u}[{\cal H}_{z_2'} (t)],
z^u_2 (t) - \bar
z_2 (t)\big>\bigg\} dt\bigg]
\nonumber \\
&&\quad\quad\quad + {\mathbb E}^{\bar u} \big [ \Phi^u(T)
 - \bar \Phi (T) -  \la
x^u (T) - \bar x (T), \bar\Phi_x(T)+\frac{1}{\bar \rho(T)}\mathbb E^{\bar u} \left[\bar\Phi_x(T)\right]
\ra \big ]
\nonumber \\
&&\quad\quad\quad + {\mathbb E}
 \big [ \gamma^u(0) - \bar \gamma (0)
 - \la
y^u (0)
- \bar y (0), \bar \gamma_y (0)  \ra\big ]
\end{eqnarray}

In view of  the convexity of ${\cal H}$, $\Phi$
and $\gamma$ (i.e. Conditions (i) and (ii)),
we obtain
\begin{eqnarray}\label{eq:41}
\begin{split}
 &{ \cal H}(t,\Theta^u(t), \mathbb E[\Theta^u(t)], u(t),\mathbb E[u(t)],
 \bar\Lambda(t),\bar R_2(t)) - {\bar{\cal H}} (t)\nonumber
\\ \geq&  \big < {\bar{\cal H}}_x (t),x^u (t) - \bar
x (t)
\big>+
\big < {\bar{\cal H}}_{x'}(t),\mathbb E[x^u (t)] -\mathbb E[ \bar x (t)]\big>
\nonumber
\\&
+ \big <\bar{ \cal H}_y (t),
y^u (t) - \bar
y (t)\big>+
\big < \bar{\cal H}_{y'} (t),\mathbb E[y^u (t)] -\mathbb E[ \bar y (t)]\big>
\\&
+ \big <\bar{ \cal H}_{z_1} (t),
z^u_1 (t) - \bar
z_1 (t)\big>+
\big < \bar{\cal H}_{z'_1} (t),\mathbb E[z^u_1 (t)] -\mathbb E[ \bar z_1 (t)]\big>
\\&
+ \big <\bar{ \cal H}_{z_2} (t),
z^u_2 (t) - \bar
z_2 (t)\big>+
\big < \bar{\cal H}_{z'_2} (t),\mathbb E[z^u_2 (t)] -\mathbb E[ \bar z_2 (t)]\big>
\\&
+ \big <\bar{ \cal H}_{u} (t),u(t) - \bar u (t)\big>+
\big < \bar{\cal H}_{u'} (t),\mathbb E[u (t)] -\mathbb E[ \bar u (t)]\big>,
\end{split}
\end{eqnarray}

\begin{eqnarray}\label{eq:42}
 \Phi^u(T) - \bar \Phi (T) \geq \left < X^u (T) - \bar X (T), \bar\Phi_x (T) \right >+\left < \mathbb E[X^u (T)] - \mathbb E[\bar X (T)], \bar\Phi_x (T) \right >
\end{eqnarray}
and
\begin{eqnarray}\label{eq:43}
 \gamma^u(0) - \bar \gamma (0) \geq \left < y^u (0) - \bar y (0), \gamma_y (0) \right >.
\end{eqnarray}

Furthermore,
 by the convex optimization principle
 (see Proposition 2.21 of \cite{ET1976}) and
  the optimality condition (iii),
   we obtain that
\begin{eqnarray}\label{eq:5.5}
 \langle u (t) - \bar u (t), \mathbb E^Q\big[{\cal H}_{u} ( t) |\mathscr F^Y_t\big] \rangle
 +\langle \mathbb E[u (t)] -
 \mathbb E[\bar u (t)], \mathbb E^Q\big[{\cal H}_{u'} ( t) |\mathscr F^Y_t\big] \rangle
\geq 0 .
\end{eqnarray}
which imply that

\begin{eqnarray}\label{eq:45}
\mathbb E^Q\big[\langle u (t) - \bar u (t), {\cal H}_{u} ( t)  \rangle\big]
 +\mathbb E^Q\big[\langle \mathbb E[u (t)] -\mathbb E[\bar u (t)],{\cal H}_{u'} ( t)\rangle\big]
\geq 0.
\end{eqnarray}

 Inserting \eqref{eq:41},\eqref{eq:42},\eqref{eq:43} and  \eqref{eq:45} into \eqref{eq:40},
we get
\begin{eqnarray}
J (u (\cdot)) - J (\bar u (\cdot)) \geq 0 .
\end{eqnarray}
 Since $u (\cdot)$ is arbitrary,
 we get that $\bar u (\cdot)$ is
 an optimal control process
and thus $(\bar u (\cdot), \bar x (\cdot), \bar y(\cdot), \bar z_1(\cdot), z_2(\cdot))$ is an optimal pair. The proof is completed.
\end{proof}

\bibliographystyle{model1a-num-names}

\end{document}